\tikzstyle arrowstyle=[scale=1]
\tikzstyle directed=[postaction={decorate,decoration={markings,mark=at position 0.6 with {\arrow[arrowstyle]{stealth};}}}]
\tikzstyle reverse directed=[postaction={decorate,decoration={markings,mark=at position 0.4 with {\arrowreversed[arrowstyle]{stealth};}}}]
 \newtheorem{thm}{Theorem}[section]
\newtheorem{lem}[thm]{Lemma}
\newtheorem{prop}[thm]{Proposition}
\newtheorem{remark}[thm]{Remark}
\numberwithin{equation}{section}
\newdimen\Squaresize \Squaresize=11pt
\newdimen\Thickness \Thickness=0.7pt
\def\Square#1{\hbox{\vrule width \Thickness
   \vbox to \Squaresize{\hrule height \Thickness\vss
    \hbox to \Squaresize{\hss#1\hss}
   \vss\hrule height\Thickness}
\unskip\vrule width \Thickness} \kern-\Thickness}
\def\Vsquare#1{\vbox{\Square{$#1$}}\kern-\Thickness}
\def\moins{\raise 1pt\hbox{{$\scriptstyle -$}}}
\begin{document}

\begin{center}
\textbf{\large{Word-representability of subdivisions of triangular grid graphs}}\textbf{ }
\par\end{center}

\begin{center}
Zongqing Chen$^{a}$, Sergey Kitaev$^{b}$, Brian Y. Sun$^{c}$\\[6pt]
\par\end{center}

\begin{center}
$^{a,c}$Center for Combinatorics, LPMC-TJKLC\\
 Nankai University, Tianjin 300071, P. R. China\\
$^{b}$Department of Computer and Information Sciences,\\ 
 University of Strathclyde, Glasgow, G1 1XH, UK
\par\end{center}

\begin{center}
Email: $^{a}$\texttt{zqchern@163.com}, $^{b}$\texttt{sergey.kitaev@cis.strath.ac.uk},
$^{c}$\texttt{brian@mail.nankai.edu.cn} 
\par
\end{center}

\par
\noindent \textbf{Abstract.} 
A graph $G=(V,E)$ is {\em word-representable} if there exists a word $w$ 
over the alphabet $V$ such that letters $x$ and $y$ alternate in $w$ if and only if $(x,y)\in E$. A {\it triangular grid graph} is a subgraph of a tiling of the plane with equilateral triangles defined by a finite number of triangles, called {\em cells}. A {\it subdivision} of a triangular grid graph is replacing some of its cells by plane copies of the complete graph $K_4$. 

Inspired by a recent elegant result of Akrobotu et al., 
who classified word-representable triangulations of grid graphs related to convex polyominoes, we characterize word-representable subdivisions of triangular grid graphs. A key role in the characterization is played by {\em smart orientations} introduced by us in this paper. As a corollary to our main result, we obtain that any subdivision of  boundary triangles in the {\em Sierpi\'{n}ski gasket graph} is word-representable. \\

\noindent \textbf{Keywords:} word-representability, semi-transitive orientation, subdivision, triangular grid graphs, Sierpi\'{n}ski gasket graph

\section{Introduction}

Let $G=(V,E)$ be a simple (i.e. without loops and multiple edges) undirected graph with 
the vertex set $V$ and the edge set $E$. We say that $G$ is {\em word-representable} if there exists a word $w$ 
over the alphabet $V$ such that letters $x$ and $y$ alternate in $w$ if and only if $(x,y)\in E$ for
any $x\neq y$.

The notion of word-representable graphs has its roots in algebra, 
where a prototype of these graphs was used by Kitaev and Seif to study
the growth of the free spectrum of the well-known
{\em Perkins semigroup} \cite{Kitaev08b}. 

Recently, a number of (fundamental) results on word-representable graphs 
were obtained in the literature; for example, see \cite{Akrobotu}, \cite{Collins14}, 
\cite{Halldorsson11}, \cite{Halldorsson15}, \cite{Kitaev08a}, \cite{Kitaev11c}, and \cite{Kitaev11}. In 
particular, Halld\'{o}rsson et al.~\cite{Halldorsson15} have shown that a graph is word-representable 
if and only if it admits a {\em semi-transitive orientation} (to be defined in Section~\ref{sec2}), which, among other important corollaries, implies that all 3-colorable graphs are word-representable. The theory of word-representable graphs is the main subject of the upcoming book~\cite{KitLoz}.

The {\em triangular tiling graph} $T^{\infty}$ (i.e., the two-dimensional {\em triangular grid}) is the {\em Archimedean tiling} $3^6$ is more common introduced in \cite{Ren87} and \cite{Gordon08}. By a {\it triangular grid graph} $G$ in this paper we mean a graph obtained from $T^{\infty}$ as follows. Specify a number of triangles, called {\em cells}, in $T^{\infty}$. The edges of $G$ are then all the edges surrounding the specified cells, while the vertices of $G$ are the endpoints of the edges (defined by intersecting lines in $T^{\infty}$). We say that the specified cells, along with any other cell whose all edges are from $G$, {\em belong} to $G$. 
Any triangular grid graph is 3-colorable, and thus it is word-representable \cite{Halldorsson15}. We consider non-3-colorable graphs obtained from triangular grid graphs by applying the operation of subdivision which is defined in the sequel.

{\it Subdividing} a cell of a triangular grid graph means subdividing it into three parts by placing a vertex in the center of the cell and making it adjacent to the three cell's vertices. A subdivision of a triangular grid graph is obtained by subdividing a number of specified cells in $G$.

Recently, Akrobotu at el.~\cite{Akrobotu} proved that a triangulation of the graph $G$ associated with a convex polyomino is word-representable if and only if $G$ is 3-colorable. Inspired by this elegant result, in the paper in hands, we characterized word-representable subdivisions of triangular grid graphs.

The paper is organized as follows. In Section~\ref{sec2} some necessary definitions, notation and known results are given. In Section~\ref{sec3} we discuss the minimal non-word-representable subdivision of a triangular grid graph, i.e. the graph, which is
an induced subgraph of any non-word-representable subdivision. In Section~\ref{sec4} we state and prove our main result (Theorem~\ref{subdivtrigrid}) saying that a subdivision of a triangular grid graph is word-representable if and only if it has no {\em interior cell} subdivided. Theorem~\ref{subdivtrigrid} is proved using the notion of a {\em smart (semi-transitive) orientation} introduced in this paper. Finally, in Section~\ref{sec5} we apply our main result to subdivisions of triangular grid graphs having equilateral triangle shape and subdivisions of the Sierpi\'{n}ski gasket graph. 

\section{Definitions, notation, and known results}\label{sec2}

Suppose that $w$ is a word and $x$ and $y$ are two distinct letters in $w$.
We say that $x$ and $y$ {\it alternate} in $w$ if the deletion of all other
letters from the word $w$ results in either $xyxy\cdots$ or $yxyx\cdots$.

A graph $G=(V,E)$ is {\it word-representable} if there exists a word $w$ over
the alphabet $V$ such that letters $x$ and $y$ alternate in $w$ if and only if 
$(x,y)\in E$ for each $x\neq y$. We say that {\it w represents G}, and such a word $w$ 
is called a {\it word-representant} for $G$.
For example, if the word $w=134231241$ then the subword induced with letters $1$ and $2$
is $12121$, hence letters $1$ and $2$ are alternating in $w$, and thus the respective nodes are connected in $G$. On the other hand, the letters $1$ and $3$ 
are not alternating in $w$, because removing all other letters we obtain $1331$; thus, $1$ and $3$ are not connected in $G$. Figure~\ref{wordandgraph} shows the graph represented by $w$.

If each letter appears exactly $k$ times in a word-representant of a graph, the word is {\it k-uniform} and
the graph is said to be {\it k-word-representable}. For example, the word $w'=13423124$ is also a word-representant 
for the graph shown in Figure~\ref{wordandgraph}, so the graph is 2-word-representable. The following theorem establishes equivalence of the notions of word-representability and uniform word-representability.

\begin{thm}\label{unif-repres-equival}(\cite{Kitaev08a}) A graph $G$ is word-representable if and only if there exists $k$ such that $G$ is $k$-word-representable. \end{thm}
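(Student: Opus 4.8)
The plan is to prove the two implications separately. The direction ``$k$-word-representable $\Rightarrow$ word-representable'' is immediate: a $k$-uniform word representing $G$ is, by definition, a word representing $G$.

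For the converse, suppose $w$ is a word-representant of $G=(V,E)$, let $|w|_x$ denote the number of occurrences of a letter $x$ in $w$, and set $k=\max_{x\in V}|w|_x$. Call $x$ \emph{saturated} if $|w|_x=k$ and \emph{deficient} otherwise. I would transform $w$ into a $k$-uniform representant by repeatedly applying one operation: prepend to the current word the word $v$ obtained by writing down one copy of each deficient letter, in the order in which these letters first occur in the current word. Each application raises $|w|_x$ by exactly $1$ for every deficient $x$ and changes nothing for the saturated letters, so $k$ is unchanged, no letter ever exceeds $k$, and the total deficiency $\sum_{x\in V}(k-|w|_x)$ strictly decreases. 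Hence after finitely many steps every letter occurs exactly $k$ times, and the resulting word --- provided each step preserves the represented graph --- is a $k$-uniform representant of $G$.

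The crux, and the step I expect to require the most care, is verifying that $vw$ represents the same graph as $w$, i.e.\ that prepending $v$ neither destroys nor creates an alternation between any pair of letters. Fix a pair $\{x,y\}$; the prepended block contributes at most one occurrence of each of $x,y$, and $(vw)|_{xy}=(v|_{xy})(w|_{xy})$. If $x,y$ are both saturated, $v|_{xy}$ is empty and nothing changes. If both are deficient with $x$ occurring first in $w$, then $v|_{xy}=xy$ and $w|_{xy}$ begins with $x$; a short case check shows that placing $xy$ in front of a word that already begins with $x$ keeps it alternating when it was, and non-alternating when it was not. The subtle case is $x$ deficient and $y$ saturated (so $v|_{xy}=x$): a pre-existing non-alternation in $w|_{xy}$ obviously survives the extra leading $x$, so the only danger is that $w|_{xy}$ is alternating and begins with $x$, which the extra $x$ would break. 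Here the counting rescues us: if $x\sim y$ then $w|_{xy}$ is alternating with $|w|_x<|w|_y=k$, and an alternating word in which the two letters appear a different number of times must carry the more frequent letter at both of its ends; thus $w|_{xy}$ actually begins with $y$, and prepending one $x$ turns $yxyx\cdots y$ into $xyxy\cdots y$, still alternating. This observation --- a small lemma about the shape of a two-letter alternating subword when one letter is deficient relative to the other --- is the key point; granting it, the termination argument above completes the proof.
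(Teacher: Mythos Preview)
The paper does not give its own proof of this statement: Theorem~\ref{unif-repres-equival} is simply quoted from \cite{Kitaev08a} and used as a black box, so there is nothing in the present paper to compare your argument against.

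That said, your argument is correct and is in the spirit of the original Kitaev--Pyatkin proof. The nontrivial direction is handled by your ``prepend the deficient letters in order of first appearance'' operation, and your case analysis is sound. The only point worth stressing is that the \emph{order} in which the deficient letters are prepended is essential: in your Case of two deficient letters $x,y$ with $x$ appearing first in $w$, prepending them as $yx$ instead of $xy$ would produce $yx\cdot x\cdots$ and destroy an existing alternation, so it is precisely the first-occurrence ordering that makes Case~2 work. You use this correctly. The mixed case (one deficient, one saturated) is indeed the crux, and your counting observation---that in an alternating two-letter word with unequal multiplicities the more frequent letter must occupy both endpoints---is exactly the lemma needed; it follows immediately from the fact that in an alternating word on $\{x,y\}$ the counts differ by at most one, and differ by exactly one only when the word begins and ends with the majority letter. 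With that, each step preserves the represented graph, the maximum multiplicity $k$ never changes, and the total deficiency strictly decreases, so the process terminates in a $k$-uniform representant.
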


Next, we define key objects of our interest including {\em semi-transitive orientations}, {\em triangular grid graphs} and the {\em Sierpi\'{n}ski gasket graph}. For graph-theoretic terminology not defined in this paper, the reader is referred to \cite{Bondy76}.

\subsection{Semi-transitive orientations}

A directed graph (digraph) is {\it semi-transitive} if it is acyclic, and
for any directed path $v_1\rightarrow v_2\rightarrow\cdots\rightarrow v_k$ with $v_i\in V$
for all $i, 1\leq i \leq k$, either
\begin{enumerate}
 \item[$\bullet$] there is no edge $v_1\rightarrow v_k$, or
 \item[$\bullet$] the edge $v_1\rightarrow v_k$ is present and there are edges $v_i\rightarrow v_j$ for all 
     $1\leq i< j \leq k$. That is, in this case, the (acyclic) subgraph induced by the vertices
     $v_1,\ldots,v_k$ is transitive. 
\end{enumerate}
We call such an orientation a {\em semi-transitive orientation}.

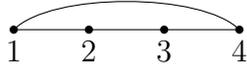
\begin{figure}[!htbp]
 \begin{center}
\begin{tikzpicture}
 \draw (0,0) node[below]{1} -- ++(1,0)node[below]{2} -- ++(1,0)node[below]{3} -- ++(1,0)node[below]{4};
 \draw (0,0) .. controls (0.5,.5)and (2.5,.5).. (3,0);
 \fill[black!100] (0,0) circle(0.3ex)
                 ++(1,0) circle(0.3ex)
                 ++(1,0) circle(0.3ex)
                 ++(1,0) circle(0.3ex);
\end{tikzpicture}
\caption{\label{wordandgraph} The graph represented by the word $w=134231241$.}
\end{center}
\end{figure}

We can alternatively define semi-transitive orientations in terms of induced subgraphs. A \emph{semi-cycle} is the directed
acyclic graph obtained by reversing the direction of one arc of a directed cycle. An acyclic digraph is a
\emph{shortcut} if it is induced by the vertices of a semi-cycle and contains a pair of non-adjacent vertices. Thus, a
digraph on the vertex set $\{ v_1, \ldots, v_k\}$ is a shortcut if it contains a directed path $v_1\rightarrow
v_2\rightarrow \cdots \rightarrow v_k$, the arc $v_1\rightarrow v_k$, and it is missing an arc $v_i\rightarrow v_j$
for some $1 \le i < j \le k$; in particular, we must have $k\geq 4$, so that any shortcut is on at least four
vertices. Slightly abusing the terminology, in this paper we refer to the arc  $v_1\rightarrow v_k$ in the last definition as a shortcut (a more appropriate name for this would be {\em shortcut arc}). Figure~\ref{shortcut} gives examples of shortcuts, where the edges $1\rightarrow 4$, $2\rightarrow 5$ and $3\rightarrow 6$ are missing, and hence $1\rightarrow 5$, $1\rightarrow 6$ and $2\rightarrow 6$ are shortcuts.

Thus, an orientation of a graph is semi-transitive if it is acyclic and contains no shortcuts.

\begin{figure}[!htbp]
 \begin{center}
\begin{tikzpicture}
 \draw (0,0) node[below]{1} -- ++(1,0)node[below]{2} -- ++(1,0)node[below]{3} -- ++(1,0)node[below]{4} 
             -- ++(1,0)node[below]{5} -- ++(1,0)node[below]{6};
 \foreach \i in {0,1,...,4}
 { \draw[,directed] (\i,0)--+(1,0);
 }
 \draw[,directed] (0,0) .. controls (0.5,.5)and (1.5,.5).. (2,0);
  \draw[,directed] (0,0) .. controls (0.5,-.5)and (3.5,-.5).. (4,0);
   \draw[,directed] (0,0) .. controls (0.5,.85)and (4.5,.85).. (5,0);
    \draw[,directed] (1,0) .. controls (1.5,.5)and (2.5,.5).. (3,0);
     \draw[,directed] (1,0) .. controls (1.5,-.75)and (4.5,-.75).. (5,0);
      \draw[,directed] (2,0) .. controls (2.5,.5)and (3.5,.5).. (4,0);
       \draw[,directed] (3,0) .. controls (3.5,.5)and (4.5,.5).. (5,0);
 \fill[black!100] (0,0) circle(0.3ex)
                 ++(1,0) circle(0.3ex)
                 ++(1,0) circle(0.3ex)
                 ++(1,0) circle(0.3ex)
                 ++(1,0) circle(0.3ex)
                 ++(1,0) circle(0.3ex);
\end{tikzpicture}

\caption{ An example of a shortcut \label{shortcut}}
\end{center}
\end{figure}
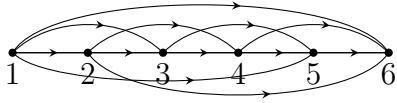

Halld\'{o}rsson et al.\ \cite{Halldorsson15} proved the following theorem that characterizes  word-representable graphs in terms of graph orientations.
\begin{thm}\label{semitra}
 (\cite{Halldorsson11}) A graph is word-representable if and only if it admits a semi-transitive orientation.
\end{thm}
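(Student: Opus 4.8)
Since the statement is an equivalence, I would prove the two implications separately and start with the easier one, that a word-representable graph admits a semi-transitive orientation. By Theorem~\ref{unif-repres-equival} it suffices to begin with a $k$-uniform word-representant $w$ of $G$. For each edge $xy$ of $G$ the letters $x$ and $y$ alternate in $w$ and occur equally often, so the restriction of $w$ to $\{x,y\}$ is either $(xy)^k$ or $(yx)^k$; orient $x\to y$ in the first case and $y\to x$ in the second. This orientation is contained in the linear order of $V$ by position of first occurrence in $w$, hence is acyclic. To see it has no shortcut, take a directed path $v_1\to v_2\to\cdots\to v_m$ together with the arc $v_1\to v_m$; one must show that every pair $v_i,v_j$ with $i<j$ forms an edge, so that its orientation is forced to be $v_i\to v_j$ by acyclicity and the induced subgraph is transitive. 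Equivalently, $v_i$ and $v_j$ must alternate in $w$, and this follows by a position-chasing argument: consecutive occurrences of $v_1$ enclose exactly one occurrence of $v_m$ (because $w$ restricted to $\{v_1,v_m\}$ is alternating), and, walking along the path, the occurrences of the intermediate letters are squeezed between those of $v_1$ and $v_m$ in a pattern that forces pairwise alternation. I expect this bookkeeping to be the only delicate point in this direction.

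The substantial direction is to construct a word-representant from a semi-transitive orientation, and I would argue by induction on $n=|V(G)|$, the case $n\le 1$ being trivial. Let $G$ be semi-transitively oriented with $n\ge 2$ vertices. Since the orientation is acyclic it has a source $v$, a vertex of in-degree $0$, so every neighbour of $v$ is an out-neighbour. The orientation induced on $G'=G-v$ is an induced sub-digraph of an acyclic shortcut-free digraph, hence is itself acyclic and shortcut-free, i.e.\ semi-transitive; by the induction hypothesis together with Theorem~\ref{unif-repres-equival}, $G'$ has a $k$-uniform word-representant $w'$.

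It remains to splice $v$ back in: to produce from $w'$ a word over $V(G)$ in which a letter alternates with $v$ precisely when it lies in $N(v)$, while no alternation among the old letters is destroyed. This insertion step is the main obstacle. The plan is first to replace $w'$, if necessary, by a power $(w')^t$ (still a uniform representant of $G'$) in order to create room, and then to place the $k$ copies of $v$ at ``cut positions'' dictated by the orientation: because $v$ is a source and the orientation is shortcut-free, the set $N(v)$ together with the arcs among its members forms a transitively layered, downward-closed configuration relative to $v$, and one inserts each successive copy of $v$ immediately before the next old letter that is in $N(v)$ or dominates a vertex of $N(v)$. Verifying that this makes $v$ alternate with every member of $N(v)$ is direct; the essential use of semi-transitivity is the converse, namely that the copies of $v$ can be positioned so that a non-neighbour $u$ of $v$ fails to alternate with $v$ — the obstruction to such a placement being exactly a shortcut emanating from $v$, which the hypothesis rules out. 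Once this insertion lemma is established the induction closes, and the two implications together give the theorem.
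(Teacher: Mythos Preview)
The paper does not supply a proof of this theorem; it is quoted from \cite{Halldorsson11} (see also \cite{Halldorsson15}) as an established result and used as a black box. There is therefore no in-paper argument against which to compare your proposal.

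Regarding the proposal itself: your forward direction is the standard construction and, once the position-chasing is written out, is correct. Your plan for the converse --- remove a source $v$, represent $G-v$ by induction, then insert copies of $v$ into the resulting uniform word --- has the same overall shape as the argument in the cited references, but, as you yourself flag, the insertion step is where the entire content lies and your description of it is not yet an argument. The rule you suggest (insert each copy of $v$ ``immediately before the next old letter that is in $N(v)$ or dominates a vertex of $N(v)$'') is not well-defined as written, and even once made precise it is not clear it yields the required alternation pattern: the placements of the $k$ copies of $v$ interact, and the verification that every non-neighbour of $v$ fails to alternate with $v$ needs a careful construction that genuinely exploits the shortcut-free hypothesis in a way your sketch does not spell out. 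As it stands, the converse direction is a plausible plan rather than a proof.
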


An immediate corollary to Theorem~\ref{semitra} is the following result.

\begin{thm}{\label{color}}(\cite{Halldorsson11}) $3$-colorable graphs are word-representable.
\end{thm}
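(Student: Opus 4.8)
The plan is to exhibit a semi-transitive orientation of any $3$-colorable graph $G$ and then invoke Theorem~\ref{semitra}. First I would fix a proper $3$-coloring $c\colon V\to\{1,2,3\}$, which exists by hypothesis, and orient each edge $(x,y)\in E$ from its endpoint of smaller color to its endpoint of larger color; this is well defined because adjacent vertices receive distinct colors. Call the resulting digraph $\vec{G}$.

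Next I would verify that $\vec{G}$ is acyclic: along any directed path $v_1\rightarrow v_2\rightarrow\cdots\rightarrow v_k$ the colors strictly increase, $c(v_1)<c(v_2)<\cdots<c(v_k)$, so no directed path can return to its start, and in particular there is no directed cycle. The same strict-increase observation gives the crucial length bound: since the colors lie in $\{1,2,3\}$, every directed path in $\vec{G}$ has at most three vertices, i.e. $k\le 3$.

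Now recall from Section~\ref{sec2} that a shortcut is supported on a semi-cycle and hence involves at least four vertices, so the condition defining a semi-transitive orientation is automatically satisfied whenever every directed path has fewer than four vertices. Combining this with the length bound just established, $\vec{G}$ contains no shortcuts, and being also acyclic it is a semi-transitive orientation of $G$. By Theorem~\ref{semitra}, $G$ is word-representable, which completes the argument.

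There is essentially no obstacle here — the statement is a genuine immediate corollary — and the only point that needs to be made carefully is the bookkeeping observation that a shortcut requires $k\ge 4$ vertices while the color-monotone orientation forces every directed path to have $k\le 3$, so the semi-transitivity check is vacuous.
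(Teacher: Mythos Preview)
Your proof is correct and is exactly the standard argument that the paper has in mind: the paper does not give an explicit proof but simply states that Theorem~\ref{color} is ``an immediate corollary to Theorem~\ref{semitra},'' and the color-monotone orientation you describe (with the observation that directed paths have length at most~$3$ while shortcuts require $k\ge 4$) is precisely how one unpacks that remark. There is nothing to add.
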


\subsection{Triangular grid graphs}


The infinite graph $T^{\infty}$ associated with the two-dimensional {\em triangular grid} (also known as the {\em triangular tiling
graph}, see \cite{Ren87} and \cite{Gordon08}) is a graph drawn in the plane with straight-line edges and defined as follows.

The vertices of $T^{\infty}$
are represented by a linear combination $xp + yq$ of the two vectors $p = (1, 0)$ and $q = (1/2,{\sqrt{3}}/{2}) $ with integers
$x$ and $y$. Thus, we may identify the vertices of $T^{\infty}$ with
pairs $(x, y)$ of integers, and thereby the vertices of $T^{\infty}$ are
points with Cartesian coordinates $(x + {y}/{2}, y \sqrt{3}/{2})$ .
Two vertices of $T$ are adjacent if and only if the Euclidean
distance between them is equal to 1 (see Figure \ref{Tinfty}). A line $\ell$ containing an edge of $T^{\infty}$ is called a {\it grid line}. Note that the degree of any vertex of $T$ is equal to 6. We refer to the triangular faces of $T^{\infty}$ as {\em cells}.

A {\it triangular grid graph} is a finite subgraph of $T^{\infty}$, which is formed by all edges bounding finitely many cells. Note that in our definition, a triangular grid graph does not have to be an induced subgraph of $T^{\infty}$. It is easy to see that  $T^{\infty}$ is 3-colorable, and thus any triangular grid graph is 3-colorable. Therefore, triangular grid graphs are word-representable by Theorem \ref{color}.


\begin{figure}[!htbp]
\begin{center}
\begin{tikzpicture}[scale=0.5]
\clip (0.3,0.3) rectangle (7.7,5.5);
\foreach \x in {-10,...,10} 
 \foreach \y in {-10,...,10}
 { \fill[black!100] (\x + .5* \y, 0.866*\y) circle(0.5ex);
 }
\foreach \x in {-10,...,10} 
{ \draw (\x,0)-- +(10,17.32);
  \draw (\x,0)-- +(-10,17.32);
}
\foreach \y in {-10,...,10} 
{ \draw (-1,0.866*\y)-- +(10,0);

}
\end{tikzpicture}
\caption{\label{Tinfty} A fragment of the graph  $T^{\infty}$. }
\end{center}
\end{figure}
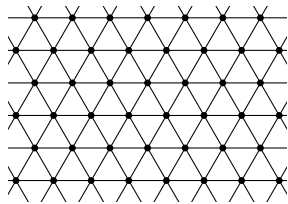

The operation of {\it subdivision of a cell} is putting a new vertex inside the cell and making it to be adjacent to every vertex of the cell. Equivalently, subdivision of a cell is replacing the cell (which is the complete graph $K_3$) by a plane version of the complete graph $K_4$. 
A {\it subdivision of a set $S$ of cells} of a triangular grid graph $G$ is a graph obtained from $G$ by subdividing each cell in $S$. The set $S$ of subdivided cells is called a {\it subdivided set}. For example, Figure~\ref{subdiv} shows $K_4$, the subdivision of a cell, and $A'$, a subdivision of $A$.

If a subdivision of $G$ results in a word-representable graph, then the subdivision is called a {\em word-representable subdivision}. Also, we say that a word-representable subdivision of a triangular grid graph $G$ is {\em maximal}
if subdividing any other cell results in a non-word-representable graph. 
\newsavebox{\Triangle}
\savebox{\Triangle}
{\begin{tikzpicture}[scale=0.5]
  \draw(0,0)--++(0.5,0.866)-- ++(0.5,-0.866)-- ++(-1,0);
  \fill[black!100] (0,0) circle(0.3ex)
                 ++(0.5,0.866)circle(0.3ex)
                 ++(0.5,-0.866)circle(0.3ex);
 \end{tikzpicture}
}
\newsavebox{\Aa}
\savebox{\Aa}
{\begin{tikzpicture}[scale=0.5]
  \draw (0,0)-- ++(0.5,0.866)-- ++(0.5,0.866)-- ++(0.5,-0.866)-- ++(0.5,-0.866)-- ++(-1,0) 
             -- ++(0.5,0.866)-- ++(-1,0)-- ++(0.5,-0.866)-- ++(-1,0);
  \fill[black!100] (0,0) circle(0.3ex)
                ++(0.5,0.866) circle(0.4ex) 
                ++(0.5,0.866) circle(0.4ex) 
                ++(0.5,-0.866) circle(0.4ex) 
                ++(0.5,-0.866) circle(0.4ex) 
                ++(-1,0) circle(0.3ex) ;
 \end{tikzpicture}
}

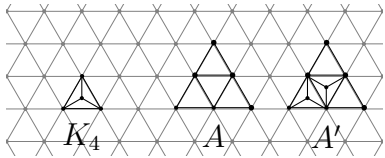
\begin{figure}[!htbp]
 \begin{center}
\begin{tikzpicture}[scale=0.5]
\clip (0.5,0.5) rectangle (10.5,4.5);
\foreach \x in {-10,...,15} 
 \foreach \y in {-10,...,15}
 { \fill[gray!100] (\x + .5* \y, 0.866*\y) circle(0.3ex);
 }
\foreach \x in {-10,...,15} 
{ \draw[gray,very thin] (\x,0)-- +(10,17.32);
  \draw[gray,very thin] (\x,0)-- +(-10,17.32);
}
\foreach \y in {-10,...,15} 
{ \draw[gray,very thin] (-1,0.866*\y)-- +(15,0);
}

\draw (2,1.732)  node[anchor=south west,inner sep=-0.3pt,outer sep=-0.3pt] {\usebox{\Triangle}}
      --++(0.5,0.2887)
      --++(0.5,-0.2887)
      ++(-0.5,0.2887)--+(0,0.5774);
\fill[black!100] (2,1.732)+(0.5,0.2887) circle(0.3ex);
\draw (5,1.732)  node[anchor=south west,inner sep=-0.3pt,outer sep=-0.3pt] {\usebox{\Aa}}
      +(3,0)  node[anchor=south west,inner sep=-0.3pt,outer sep=-0.3pt]  {\usebox{\Aa}};
\draw (8,1.732)--++(0.5,0.2887)--++(0.5,-0.2887)++(-0.5,0.2887)--++(0,0.5774)
      --++(0.5,-0.2887)--++(0.5,0.2887)++(-0.5,-0.2887)--++(0,-0.5774);
\fill[black!100] (8,1.732)+(0.5,0.2887) circle(0.3ex)
                 +(1,0.5774) circle(0.3ex);
\path (2.5,1) node {$K_4$}
      ++(3.5,0) node {$A$}
      ++(3,0) node {$A'$};
\end{tikzpicture}
\caption{\label{subdiv} Examples of subdivisions: $K_4$ is the subdivision of a cell, and $A'$ is a subdivision of $A$.}
\end{center}
\end{figure}


An edge of a triangular grid graph $G$ shared with a cell in $T^{\infty}$ that does not belong to $G$ is called a {\em boundary edge}. Recall that a cell belongs to $G$ if and only if all of its edges belong to $G$. A non-boundary edge belonging to $G$ is called an {\em interior edge}. A cell in $G$ that is incident to at least one boundary edge is called a {\em boundary cell}. A non-boundary cell in $G$ is called an {\em interior cell}. The boundary edges in the graphs $H$ and $K$ in Figure~\ref{BoundaryEdge} are in bold. 

 \newsavebox{\Abig}
 \sbox{\Abig}
 {\begin{tikzpicture}[scale=1]
  \draw[thin](0,0)--++(0.5,0.866)-- ++(0.5,-0.866)-- ++(-1,0);
  \fill[black!100] (0,0) circle(0.3ex)
                 ++(0.5,0.866)circle(0.3ex)
                 ++(0.5,-0.866)circle(0.3ex);
 \end{tikzpicture}
 }
 \newsavebox{\Vbig}
 \sbox{\Vbig}
 {\begin{tikzpicture}[scale=1]
  \draw[thin](0,0)--++(0.5,0.866)-- ++(-1,0)-- ++(0.5,-0.866);
  \fill[black!100] (0,0) circle(0.3ex)
                 ++(0.5,0.866)circle(0.3ex)
                 ++(-1,0)circle(0.3ex);
 \end{tikzpicture}
 }

\begin{figure}[!htbp]
 \begin{center}
\begin{tikzpicture}
\clip (-0.5,0.3) rectangle (7.5,4);
\foreach \x in {-10,...,10} 
 \foreach \y in {-10,...,10}
 { \fill[gray!100] (\x + .5* \y, 0.866*\y) circle(0.3ex);
 }
\foreach \x in {-10,...,10} 
{ \draw[gray,very thin] (\x,0)-- +(10,17.32);
  \draw[gray,very thin] (\x,0)-- +(-10,17.32);
}
\foreach \y in {-10,...,10} 
{ \draw[gray,very thin] (-1,0.866*\y)-- +(10,0);
}
\draw (1,1.732)  node[anchor=south west,inner sep=-0.6pt,outer sep=-0.6pt] {\usebox{\Abig}}
      ++(0.5,0)  node[anchor=south west,inner sep=-0.6pt,outer sep=-0.6pt] {\usebox{\Vbig}}
      ++(0,0.866)  node[anchor=south west,inner sep=-0.6pt,outer sep=-0.6pt] {\usebox{\Abig}}
      ++(-0.5,0)  node[anchor=south west,inner sep=-0.6pt,outer sep=-0.6pt] {\usebox{\Vbig}}
      ++(-0.5,0)  node[anchor=south west,inner sep=-0.6pt,outer sep=-0.6pt] {\usebox{\Abig}}
      ++(0,-0.866)  node[anchor=south west,inner sep=-0.6pt,outer sep=-0.6pt] {\usebox{\Vbig}};
\draw[very thick] (1,1.732) node[left]{$1$}--++(1,0)node[right]{$2$}--++(0.5,0.866)node[right]{$3$}--++(-0.5,0.866) node[right]{$4$}
               --++(-1,0)node[left]{$5$}--++(-0.5,-0.866)node[left]{$6$}--++(0.5,-0.866)+(0.3,0.7) node{$7$};

\draw (3+0.5,1.732)  node[anchor=south west,inner sep=-0.6pt,outer sep=-0.6pt] {\usebox{\Vbig}}
      ++(0.5,-0.866)  node[anchor=south west,inner sep=-0.6pt,outer sep=-0.6pt] {\usebox{\Vbig}}
      ++(1,0.866)  node[anchor=south west,inner sep=-0.6pt,outer sep=-0.6pt] {\usebox{\Abig}}
      ++(-0.5,0.866)  node[anchor=south west,inner sep=-0.6pt,outer sep=-0.6pt] {\usebox{\Abig}}
      ++(-1,0)  node[anchor=south west,inner sep=-0.6pt,outer sep=-0.6pt] {\usebox{\Abig}}
      ++(0,-0.866)  node[anchor=south west,inner sep=-0.6pt,outer sep=-0.6pt] {\usebox{\Vbig}};
      
\draw[very thick] (4,1.732) node[left]{$1$}--++(0.5,-0.866)node[left]{$2$}--++(0.5,0.866)node[below]{$3$}--++(1,0)node[right]{$4$}
                 --++(-0.5,0.866)node[right]{$5$}--++(-0.5,0.866)node[right]{$6$}
                 --++(-0.5,-0.866) node[below]{$7$}--++(-0.5,0.866)node[right]{$8$}--++(-0.5,-0.866) node[left]{$9$}
                 --++(0.5,-0.866);
\draw[very thick] (4,1.732)--++(1,0)--++(0.5,0.866)--++(-1,0)--++(-0.5,-0.866);
\draw(1.5,1.2) node[below] {$H$} +(3.7,0) node[below] {$K$} ;

\end{tikzpicture}

\caption{\label{BoundaryEdge} Graphs $H$ and $K$, where boundary edges are in bold.}
\end{center}
\end{figure}
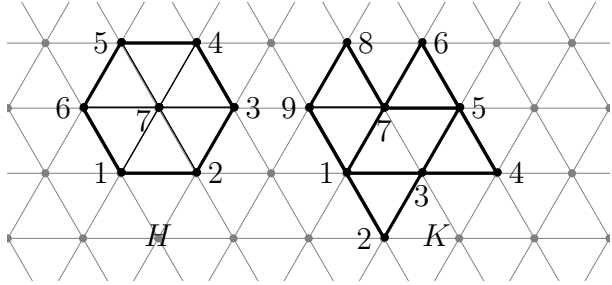

A subdivision of a triangular grid graph that involves subdivision of just boundary cells is called a {\it boundary subdivision}. A boundary edge parallel to the edge $(1,2)$ (resp., $(2,3)$, $(3,4)$, $(4,5)$, $(5,6)$ and $(1,6)$) in the graph $H$ in Figure~\ref{BoundaryEdge} and having the same layout of the boundary cell incident to it, is of type S (resp., SE, NE, N, NW and SW), which stands for ``South'' (resp., ``South-East'', ``North-East'', ``North'', ``North-West'' and ``South-West''). For example, the boundary edges of the graph $K$ in Figure~\ref{BoundaryEdge}
$(1,2)$ and $(1,9)$ are  of type SW, while the boundary edges $(3,5)$, $(7,6)$ and $(8,9)$ are of type NW. 
The {\em property set} of a boundary cell is the set of types of boundary edges incident to the cell. For example, in the graph $K$ in Figure \ref{BoundaryEdge}, the types of the cells 123 and 179 are \{N, SW, SE\} and \{SW, SE\}, respectively.

\subsection{Sierpi\'{n}ski gasket graph}

The two-dimensional {\em Sierpi\'{n}ski gasket graph}, a lattice version of the {\em Sierpi\'{n}ski gasket}, also known as {\em Sierpi\'{n}ski triangle}, is one of 
the most studied self-similar fractal-like graphs.
The construction of this graph, 
denoted by $SG(n)$ for initial stages is shown in Figure \ref{SGn}. 
At stage $n = 0$,
it is an equilateral triangle, i.e. a cell in $T^{\infty}$,
while a stage $n + 1$ giving $SG(n+1)$ is obtained by the juxtaposition of three graphs $SG(n)$ constructed on stage $n$. 
It is not difficult
to see that $SG(n)$ is a triangular grid graph and it has $\frac{3}{2}(3^n+1)$
vertices and $3^{n+1}$ edges.

\newsavebox{\A}
\savebox{\A}
{\begin{tikzpicture}[scale=0.5]

\draw (0,0) -- ++(0.5774,1) --++(0.5774,1)
                   --++(0.5774,-1) --++(0.5774,-1)
                   --++(-2*0.5774,0)--++(-0.5774,1)--++(2*0.5774,0)
                   --++(-0.5774,-1)--+(-2*0.5774,0)
                   (2*0.5774,0);
\fill[black!100] (0,0) circle(0.5ex)
                 ++(0.5774,1) circle(0.5ex)
                 ++(0.5774,1) circle(0.5ex)
                 ++(0.5774,-1) circle(0.5ex)
                 ++(0.5774,-1) circle(0.5ex)
                 ++(-2*0.5774,0)circle(0.5ex)
               ;
\end{tikzpicture}
}

\begin{figure}[!htbp]
\begin{center}
\begin{tikzpicture}[scale=0.5]
 \draw (0,0) -- ++(0.5774,1) --++(0.5774,-1)
                   --++(-2*0.5774,0);
 \fill[black!100] (0,0) circle(0.3ex)
                 ++(0.5774,1) circle(0.5ex)
                 +(0.5774,-1) circle(0.5ex);
 \path (4,1) node {\usebox{\A}};
  
 \path (0,0)++(8,1) node {\usebox{\A}}
      +(0.5774*2,2) node {\usebox{\A}}
      +(0.5774*4,0) node {\usebox{\A}}
      ;
 \path (0,0)++(14.5,1) node {\usebox{\A}}
      +(0.5774*2,2) node {\usebox{\A}}
      +(0.5774*4,0) node {\usebox{\A}}
      ;
 \path (0.5774*4,4)++(14.5,1) node {\usebox{\A}}
      +(0.5774*2,2) node {\usebox{\A}}
      +(0.5774*4,0) node {\usebox{\A}}
      ; 
 \path (0.5774*8,0)++(14.5,1) node {\usebox{\A}}
      +(0.5774*2,2) node {\usebox{\A}}
      +(0.5774*4,0) node {\usebox{\A}}
      ;      
 \path (0.5,-0.7) node {$SG(0)$}
       ++(3.5,0) node {$SG(1)$}
       ++(5,0) node {$SG(2)$}
       ++(8.5,0) node {$SG(3)$};
\end{tikzpicture}
\caption{\label{SGn} The first four stages corresponding to $n = 0, 1, 2, 3$ in construction of the two-dimensional Sierpi\'{n}ski gasket graph $SG(n)$.}
\end{center}
\end{figure}
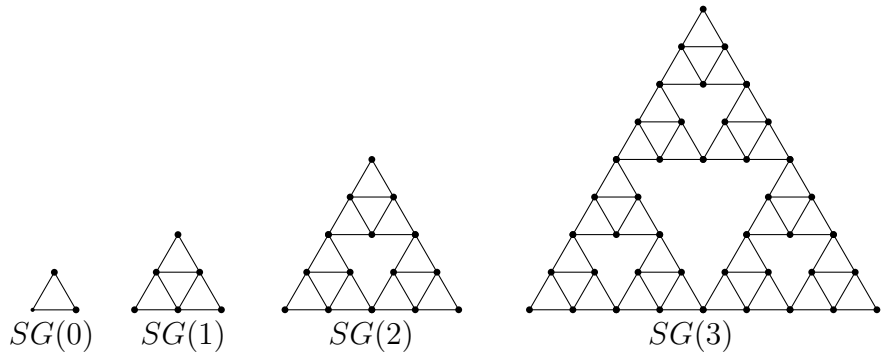

\section{The minimal non-word-representable subdivisions of a triangular grid graph}\label{sec3}

The following theorem can be proved by showing that the graph in question does not accept a semi-transitive orientation, which requires considering several cases and subcases, and then applying Theorem~\ref{semitra}. However, we provide a combinatorics on words type of proof that is essentially the same as proving in  \cite{Halldorsson10} that the graph co-($T_2)$ is non-word-representable.

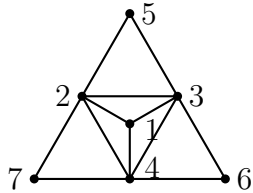
\begin{figure}[!htbp]
\begin{center}
\begin{tikzpicture}[scale=1.1]
\draw[thick] (0,0) node[anchor=east]{7}-- ++(0.5774,1) node[anchor=east] {2}--++(0.5774,1) node[anchor=west] {5}
                   --++(0.5774,-1) node[anchor=west] {3}--++(0.5774,-1) node[anchor=west] {6}
                   --++(-2*0.5774,0)--++(-0.5774,1)--++(2*0.5774,0)
                   --++(-0.5774,-1)--+(-2*0.5774,0)
                   (2*0.5774,0)--++(0,2/3) --+(-0.5774,1/3)
                   (2*0.5774,2/3)--+(0.5774,1/3) ;
\path (1.2,0.14) node [anchor=west]{4}
     +(0,0.45) node [anchor= west]{1};

\fill[black!100] (0,0) circle(0.3ex)
                 ++(0.5774,1) circle(0.3ex)
                 ++(0.5774,1) circle(0.3ex)
                 ++(0.5774,-1) circle(0.3ex)
                 ++(0.5774,-1) circle(0.3ex)
                   (2*0.5774,0) circle(0.3ex)
                 ++(0,2/3) circle(0.3ex);
\end{tikzpicture}
\caption{\label{nonRepTri} A non-word-representable subdivision $A''$ of the triangular grid graph $A$ shown in Figure~\ref{subdiv}.}
\end{center}
\end{figure}

\begin{thm}\label{A''-non-word-repr}
 The graph $A''$ in Figure~\ref{nonRepTri} is non-word-representable.
\end{thm}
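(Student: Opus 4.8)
The plan is to show directly that no word $w$ over the alphabet $\{1,\dots,7\}$ can represent the graph $A''$, by deriving a contradiction from the alternation constraints imposed by its edges and non-edges. First I would record the structure of $A''$: the six ``outer'' vertices $2,3,4,5,6,7$ carry the triangular grid graph $A$ (two adjacent triangles sharing a vertex), and vertex $1$ is the subdivision vertex placed in the cell with corners $2,3,4$, so $1$ is adjacent exactly to $2$, $3$ and $4$; crucially $1$ is \emph{not} adjacent to $5$, $6$ or $7$, and, reading off Figure~\ref{nonRepTri}, the pairs $\{5,7\}$, $\{5,6\}$ (or whichever the two ``missing'' long edges are) are non-edges while all the short edges of the two triangles are present. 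I would fix the precise edge/non-edge list once and for all at the start of the proof from the figure.

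Next I would run the standard combinatorics-on-words argument, the same one used in \cite{Halldorsson10} to show that co-$(T_2)$ is non-word-representable, adapted to $A''$. By Theorem~\ref{unif-repres-equival} we may assume $w$ is $k$-uniform for some $k$. The key local fact I will use repeatedly: if $x$ and $y$ are adjacent and $z$ is adjacent to exactly one of them, then in the subword $w|_{\{x,y,z\}}$ the occurrences of $z$ must all lie ``on one side'' in a controlled way; and if $x,y,z$ form a triangle then $w|_{\{x,y,z\}}$ is an alternating word on three letters, which forces their first occurrences (and their last occurrences) to appear consecutively in a fixed cyclic pattern. I would apply this to the triangle $\{2,3,4\}$ together with vertex $1$: since $1\sim 2,3,4$, the word $w|_{\{1,2,3,4\}}$ restricted to $\{1,2,3,4\}$ is $1$-alternating with all four letters, i.e. $K_4$ is represented, which pins down the relative order of blocks of $1,2,3,4$ tightly (up to rotation/reflection, $w|_{\{1,2,3,4\}}$ looks like $(1234)^k$ or a reflection). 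Then I bring in $5$, $6$, $7$: each of them is adjacent to some of $2,3,4$ but not to $1$, so the placement of, say, $5$ relative to the $1$-blocks is forced, while its alternation (or non-alternation) with $2,3,4$ further constrains it; doing this for $5,6,7$ simultaneously and using the non-edges among $\{5,6,7\}$ and the edges of the second triangle produces incompatible requirements.

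Concretely, the contradiction should emerge as follows: the $K_4$ on $\{1,2,3,4\}$ forces $2$ and $4$ (the two vertices of that cell that also belong to the outer structure and are ``separated'' by $1$) into a rigid cyclic position; vertex $5$, being adjacent to two of $\{2,3,4\}$ but forming a non-edge with $1$, must be inserted in a way that makes it alternate with those two but the forced position then makes it alternate (or fail to alternate) with the third in a way that conflicts with the actual edge set; propagating the same reasoning through $6$ and $7$ closes the loop. I would present this as a short sequence of ``without loss of generality'' normalizations followed by a handful of case splits on the cyclic orientation of the $K_4$-pattern, each terminating in a violated alternation constraint.

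The main obstacle I anticipate is bookkeeping: making the ``one-sided placement'' lemma precise enough to be airtight while keeping the case analysis short, and correctly reading the exact non-edges of $A''$ from Figure~\ref{nonRepTri} (it is easy to mis-identify which long diagonals are present). An alternative, possibly cleaner route — the one the authors mention — is to invoke Theorem~\ref{semitra} and show $A''$ admits no semi-transitive orientation: one orients the rigid $K_4$ on $\{1,2,3,4\}$, notes that acyclicity plus the shortcut condition on the directed $3$- and $4$-paths through the two triangles forces a shortcut (a directed path of length $3$ whose endpoints are joined by an arc but some chord is missing, exactly because $1\not\sim 5,6,7$). I would keep this orientation-based argument in reserve as a cross-check, but carry out the words-based proof as the primary one, mirroring \cite{Halldorsson10}.
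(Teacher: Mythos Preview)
Your approach is essentially the paper's: assume a $k$-uniform representant, use the $K_4$ on $\{1,2,3,4\}$ to pin the cyclic block pattern, then derive a contradiction from an outer vertex. Two points will save you the bookkeeping you worry about. First, the reduction to a single pattern $(1234)^k$ is \emph{not} just ``rotation/reflection'' of the word; you must use the $S_3$ symmetry of $A''$ permuting $\{2,3,4\}$ (carrying $\{5,6,7\}$ along) together with cyclic shift of $w$---this is exactly why vertices $5$ and $6$ must be present even though they never enter the argument (the paper's Remark makes this explicit). Second, you do not need all of $5,6,7$: after normalizing to $1234$, vertex $7$ alone suffices, because $\{2,4,7\}$ is a triangle while $7\not\sim 1$ and $7\not\sim 3$. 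The paper's whole argument is then a two-line dichotomy on whether some occurrence of $7$ lies in a $[2^i,4^i]$-interval: if yes, the triangle $\{2,4,7\}$ forces $7$ into every such interval, whence $7$ alternates with $1$; if no, the adjacencies $7\sim 2$ and $7\sim 4$ force exactly one $7$ between consecutive intervals, whence $7$ alternates with $3$. Either way a non-edge becomes an edge.
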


\begin{proof}
 Assume that the graph $A''$ is word-representable. Thus, by Theorem~\ref{unif-repres-equival}, it is $k$-word-representable for some $k$. Let $w$ be a $k$-uniform word representing $A''$, and let $x^i$ denote the $i$-th occurrence (from left to right) of a letter $x$ in $w$. 
 
 The vertices $1,2,3,4$ form a clique; so their appearances $1^i,2^i,3^i,4^i$ in W must be in the same
order for each $i=1,2,\ldots,k$. By symmetry and taking into account that a cyclic shift of $w$ represents the same graph (see~\cite{Kitaev08a}), without loss of generality, we may assume
that the order of occurrences of the four letters is $1234$.
Now let $I_i$ be the $[2^i,4^i]$-interval in $w$ for $i=1,2,\ldots,k$, which is all letters in $w$ between $2^i$ and $4^i$. Then there are two possible cases.
\begin{enumerate}
 \item[i.] $7\in I_j$ for some $j\in\{1,2,\cdots,k\}$. Since $\{2,4,7\}$ form a clique, 7 must be inside each
of the intervals $I_1,I_2,\ldots,I_k$. But then 1 and 7 are alternating in $w$, hence they are adjacent in $A''$, a
contradiction.
 \item[ii.] $7\notin I_j$ for every $j=1,2,\ldots,k$. Again, since
 7 is adjacent to both 2 and 4, each pair of consecutive intervals $I_j, I_{j+1}$ must be separated by a single 7.
 But then 7 is adjacent to 3, a contradiction.
\end{enumerate}
We are done.\end{proof}

\begin{remark} Note that the nodes $5$ and $6$ do not appear in the proof of Theorem~\ref{A''-non-word-repr}, which may look as exactly the same arguments would prove that the graph $G'$ obtained by removing nodes $5$ and $6$ is non-word-representable, which is not the case. In fact, our proof of Theorem~\ref{A''-non-word-repr} cannot be applied to $G'$ since this graph has no symmetry, which was used by us. 
\end{remark}

\section{Subdivisions of triangular grid graphs of general shape}\label{sec4}

By Theorem~\ref{A''-non-word-repr}, taking into account the hereditary nature of word-representable graphs, if $A''$ is an induced subgraph of a graph $G$ then $G$ is non-word-representable. The following theorem shows that the presence of $A''$ as an induced subgraph is necessary and sufficient condition for a subdivision of a triangular grid graph to be non-word-representable. Thus, $A''$ is the minimal non-word-representable subdivision for all triangular grid graphs, i.e. it is  an induced subgraph of any non-word-representable subdivision of a triangular grid graph. On the other hand, there is the unique maximal word-representable subdivision for any triangular grid graph $G$, which we call the {\em maximum subdivision} of $G$. This subdivision is obtained by subdividing {\em all} boundary cells of $G$.
  
Our main result in this paper is the following theorem.

\begin{thm}\label{subdivtrigrid}
A subdivision of a triangular grid graph $G$ is word-representable if and only if it has no induced subgraph
 isomorphic to $A''$, that is, $G$ has no subdivided interior cell.
\end{thm}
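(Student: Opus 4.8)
The ``only if'' direction is immediate: if $G$ has a subdivided interior cell, then by definition that cell together with the six cells surrounding it (two along each of its three edges, which exist because the cell is interior) forms an induced copy of $A''$, so by Theorem~\ref{A''-non-word-repr} and heredity of word-representability the subdivision is non-word-representable. The content is in the ``if'' direction: assuming no interior cell is subdivided, we must exhibit a semi-transitive orientation and invoke Theorem~\ref{semitra}. The plan is to orient the given subdivision of $G$ explicitly and then verify acyclicity and the absence of shortcuts.

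First I would fix a semi-transitive orientation of the underlying triangular grid graph $G$ itself. Since $G$ is $3$-colorable, color the vertices with $\{0,1,2\}$ using the standard coloring of $T^\infty$ and orient each edge from the smaller color to the larger color; this is acyclic and transitively oriented on every triangle, hence trivially semi-transitive (indeed it is the ``standard'' orientation used to prove Theorem~\ref{color}). Call this the \emph{base orientation}. The issue is how to extend it across the added center vertices of the subdivided (necessarily boundary) cells so that no shortcut is created; this is exactly where the notion of a \emph{smart orientation} advertised in the introduction should come in. For a subdivided boundary cell with vertices $a,b,c$ (colors, say, $0,1,2$, so the base orientation is $a\to b$, $a\to c$, $b\to c$) and new center $z$, the natural choices are $z$ a source ($z\to a, z\to b, z\to c$), $z$ a sink ($a\to z, b\to z, c\to z$), or $z$ intermediate ($a\to z\to c$ with $z$ adjacent to $b$ oriented either way). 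A source or a sink at $z$ can never lie on a directed path of length $\ge 2$ through $z$, so it cannot be the internal vertex of a shortcut, and being a leaf-like vertex of the $K_4$ it cannot be the endpoint $v_1$ or $v_k$ of a shortcut either unless the shortcut already lives in $G$; so the key claim to prove is: orienting every center vertex as, say, a \emph{source} yields a semi-transitive orientation of the whole subdivision. The ``smart'' qualifier presumably records which of source/sink/intermediate is forced by the cell's property set so that adjacent subdivided cells remain compatible, but since distinct subdivided cells are boundary cells and share at most an edge of $G$ (never a center vertex), I expect that making every center a source works uniformly.

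The main verification, and the step I expect to be the real obstacle, is ruling out shortcuts in the extended orientation. A shortcut is an induced acyclic subgraph on $k\ge 4$ vertices consisting of a directed path $v_1\to\cdots\to v_k$ plus the arc $v_1\to v_k$ with some arc $v_i\to v_j$ missing. I would argue: (1) any directed path in the extended orientation that passes through a center vertex $z$ must \emph{start} at $z$ (since $z$ is a source), so a center vertex can only occur as $v_1$ in a potential shortcut; (2) if $v_1=z$ is a center of cell $abc$, its out-neighbours are exactly $a,b,c$, and the path $v_1\to v_2\to\cdots\to v_k$ has $v_2\in\{a,b,c\}$ while $v_k$ is adjacent to $z$, forcing $v_k\in\{a,b,c\}$ as well; but then $v_1\to v_k$, $v_1\to v_2$ are arcs and one checks using the base orientation on the triangle $abc$ that the induced subgraph on $\{z\}\cup\{v_2,\dots,v_k\}$ is in fact transitive, so it is not a shortcut; (3) hence every shortcut would have to lie entirely within $G$ together with possibly center vertices appearing \emph{internally}, which (1) forbids, so any shortcut lies within $G$ — contradicting semi-transitivity of the base orientation. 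Acyclicity is easy: a directed cycle cannot use a source vertex, so it would lie in $G$, which is acyclic under the base orientation. I would organize the write-up by first defining the base orientation, then the smart orientation on subdivided boundary cells, then proving the acyclicity lemma and the no-shortcut lemma as above, and finally assembling these with Theorem~\ref{semitra}; the delicate bookkeeping will be case (2), handling all color patterns of the triangle $abc$ and all possible lengths $k$ of the path, but in each case the transitivity of the triangle under the base orientation does the work.
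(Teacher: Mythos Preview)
Your ``only if'' direction is fine in spirit (though an interior cell has \emph{three} edge-neighbouring cells, not six; those three together with the subdivided cell already give the induced copy of $A''$).

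The ``if'' direction, however, has a genuine gap. Your claim that making every center vertex a source yields a semi-transitive orientation is false, and the error is in step~(2) of your no-shortcut argument. You correctly observe that if $v_1=z$ then $v_2,v_k\in\{a,b,c\}$, but you then assert that the induced subgraph on $\{z,v_2,\dots,v_k\}$ is transitive. This only follows if \emph{every} intermediate vertex $v_3,\dots,v_{k-1}$ also lies in $\{a,b,c\}$, which you have not shown and which is in general false.

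Here is a concrete counterexample. Take two cells $abc$ and $ab'c$ sharing the edge $ac$, with $abc$ a boundary cell (say $ab$ is a boundary edge) that is subdivided with center $z$. In the $3$-colouring orientation with colours $a=0$, $b=b'=1$, $c=2$, the base arcs include $a\to b'$ and $b'\to c$. Making $z$ a source gives the directed path $z\to a\to b'\to c$ together with the arc $z\to c$, but $z$ is not adjacent to $b'$: this is a shortcut. The symmetric problem occurs if you make $z$ a sink (the path $a\to b'\to c\to z$ with arc $a\to z$, missing $b'\to z$). So no uniform source/sink rule works; the orientation at $z$ must depend on \emph{which} edges of the cell are boundary edges, precisely so that the ``escape route'' through $b'$ across an interior edge is blocked.

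This is exactly what the paper's smart orientation does: it uses a different base orientation on $G$ (horizontal edges left-to-right, all others top-to-bottom, which allows long directed paths and is \emph{not} the $3$-colouring orientation), and for each subdivided boundary cell it selects one of six orientation patterns $A,B,C,a,b,c$ according to the type of a boundary edge of that cell. The verification that this is shortcut-free (Lemma~4.3) then requires an eight-case analysis over the possible arc types, repeatedly using the constraint that certain adjacent cells cannot exist because the relevant edge is forced to be boundary. Your uniform scheme bypasses exactly this coupling between the cell's boundary structure and the center's orientation, and that coupling is where the real work lies.
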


Proving the following proposition gave us an idea how to prove Theorem~\ref{subdivtrigrid} via a special type of orientations.

 \begin{prop}\label{prop-H}
 The graph obtained by subdividing all the cells of $H$ (shown in Figure \ref{BoundaryEdge}) is word-representable.
 \end{prop}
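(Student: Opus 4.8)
The plan is to exhibit an explicit semi-transitive orientation of the graph $H^{+}$ obtained by subdividing all cells of $H$, and then invoke Theorem~\ref{semitra}. The graph $H$ consists of six triangular cells arranged in a hexagonal ring around the central vertex $7$ (using the labelling of Figure~\ref{BoundaryEdge}); subdividing each cell adds six new ``center'' vertices, one inside each cell, each joined to the three corners of its cell. So $H^{+}$ has $13$ vertices, and every edge of $H^{+}$ lies in at least one $K_4$ (the subdivided cell). The first step is to orient the outer $6$-cycle $1\to 2\to 3\to 4\to 5\to 6$ together with all six spokes $i\to 7$ oriented \emph{towards} $7$, except we must break acyclicity carefully: in fact I would orient the spokes so that $7$ is a global sink relative to the ring, e.g. $1,2,3,4,5,6$ each point into $7$, and orient the ring edges consistently, say $1\to2\to3\to4\to5\to6$ and $1\to 6$ — here one checks that no directed cycle is created because $7$ has out-degree $0$ on the ring edges.

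The second step is to orient the six added center vertices. Inside the cell on corners $\{a,b,c\}$ with already-oriented boundary, say $a\to b$, $b\to c$, $a\to c$, place the center $m$ so that it becomes a source or sink of that $K_4$ in a way compatible with the surrounding orientation: the natural choice is to make $m$ a source, $m\to a$, $m\to b$, $m\to c$, whenever the cell's corner-triangle is already transitive (which it is, being a triangle with a consistent linear order). This keeps each subdivided cell a transitive tournament on $4$ vertices, hence with no shortcut and acyclic locally. Acyclicity of the whole of $H^{+}$ then follows because the center vertices are sources (no incoming arcs at all) and the remaining digraph on $\{1,\dots,7\}$ was already shown acyclic.

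The third and main step — the expected obstacle — is ruling out shortcuts that span \emph{more than one} cell, i.e. directed paths that enter and leave the ring through different center vertices or that pass through $7$. Because each center vertex is a source, no directed path can pass \emph{through} it, so any long directed path lives in the subgraph on $\{1,\dots,7\}$ possibly with a source-vertex as its first vertex; thus it suffices to check that the orientation of $H$ itself (the ring plus spokes into $7$) is semi-transitive, and then that prepending a source does not create a shortcut. The orientation of $H$ on $\{1,\dots,7\}$ has all directed paths of length $\le 3$ (since $7$ is a sink and the ring is a path $1\to\cdots\to 6$ with the single chord $1\to 6$), and one verifies directly that the only path-plus-chord configuration, $1\to 2\to\cdots\to 6$ with $1\to 6$, is \emph{not} a shortcut only if all intermediate chords $i\to j$ are present — which they are not, so I must instead choose the ring orientation to avoid the chord $1\to6$ being a shortcut. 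The clean fix, which I would adopt, is to orient the outer hexagon as a path from one ``corner'' to the antipodal one in two arcs of length three meeting the chord structure of $H$ so that no semi-cycle on four or more ring vertices exists; concretely, using that $H$ is $3$-colorable one can $3$-color $H^{+}$? — no, $H^{+}$ need not be $3$-colorable, which is exactly why a bespoke orientation is needed. So the real content is: pick the spoke and ring directions on $\{1,\dots,7\}$ so that every directed path has length $\le 2$, which is achievable by making $7$ a sink, two antipodal ring vertices sources, and the ring edges flow from those two sources outward; then no shortcut arc can exist (a shortcut needs a path of length $\ge 3$), the center-vertex sources extend paths by at most one more step landing in a $K_4$, and a short check handles those. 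Assembling these observations gives a semi-transitive orientation of $H^{+}$, so by Theorem~\ref{semitra} the graph is word-representable. $\qed$
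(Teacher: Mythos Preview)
Your overall strategy---exhibit a semi-transitive orientation of the subdivided graph and invoke Theorem~\ref{semitra}---is exactly what the paper does. But the specific orientation you settle on does not work, and the gap is not merely a matter of checking cases.

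First, your ``clean fix'' on $\{1,\dots,7\}$ (vertex $7$ a sink, two antipodal ring vertices as sources) does \emph{not} give directed paths of length at most~$2$. With sources $1$ and $4$, say, one of the remaining ring edges must be oriented so that some path such as $1\to 2\to 3$ or $1\to 6\to 5$ has length~$2$ on the ring; appending the spoke into $7$ gives a path of length~$3$, and since $1$ is not adjacent to $3$ (or to $5$), the arc $1\to 7$ is a shortcut. To get all paths in $H$ of length at most~$2$ with $7$ a sink you would need \emph{three} alternating ring sources, not two.

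Second, and more seriously, even with that repaired (three ring sources, $7$ a sink), making every centre vertex a source still produces shortcuts. Take $m$ the centre of the cell $\{1,2,7\}$: then $m\to 1\to 6\to 7$ is a directed path of length~$3$, the arc $m\to 7$ is present, but $m$ and $6$ are non-adjacent. So the ``short check handles those'' step would fail. The underlying reason is that a source-centre $m$ sees only its own cell $\{i,i{+}1,7\}$, yet the step $m\to i$ can leak to the neighbouring ring vertex $i{-}1$ and then down to $7$, escaping the $K_4$.

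There \emph{is} a way to make your all-centres-as-sources idea work, but it requires $7$ to be neither a source nor a sink: for instance orient along the linear order $1,3,5,7,2,4,6$ (so $1,3,5\to 7\to 2,4,6$, and ring edges accordingly). Then $2,4,6$ are global sinks, every path in $H$ has length at most~$2$, and the only length-$3$ paths from a centre $m$ stay inside its own $K_4$. The paper instead exhibits a different explicit orientation (Figure~\ref{Hori}) in which $7$ is neither source nor sink and the centres are \emph{not} sources; that orientation is chosen because it is an instance of the ``smart orientation'' the paper later proves semi-transitive for arbitrary triangular grid graphs.
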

\begin{proof}
It is sufficient to show that the orientation of $H$
shown in Figure \ref{Hori} is semi-transitive, which can be done by direct inspection checking that no arc can be a shortcut or be involved in a directed cycle.
\end{proof}

\newsavebox{\AsixOri}
\newsavebox{\SixOri}
\sbox{\AsixOri}
{
\begin{tikzpicture}  
  \draw[,reverse directed] (0,0) -- +(0.5774,1);  
  \draw[,directed] (0,0) -- +(1.1547,0);
  \draw[,directed] (0,0) -- +(0.5774,-1);
  \draw[,directed] (0,0) -- +(-0.5774,-1);
  \draw[,reverse directed] (0,0) -- +(-1.1547,0);
  \draw[,reverse directed] (0,0) -- +(-0.5774,1);
  
  \draw[,reverse directed] (0.5774,1) -- +(-1.1547,0);
  \draw[,directed] (0.5774,1) -- +(0.5774,-1);
  \draw[,reverse directed] (0.5774,-1) -- +(0.5774,1);;
  \draw[,reverse directed] (0.5774,-1) -- +(-1.1547,0);
  \draw[,directed] (-1.1547,0) -- +(0.5774,-1);
  \draw[,reverse directed] (-1.1547,0) -- +(0.5774,1); 
  
  \fill[black!100] (0,0) circle(0.3ex);
  \foreach \rd in {0,...,5}
    {  \fill[black!100] [rotate=\rd *60] (0.5774,1) circle(0.3ex);
       \fill[black!100] [rotate=\rd *60] (0,2/3) circle(0.3ex);
    }
  \draw[,directed] (0,2/3) -- +(0.5774,1/3);  
  \draw[,reverse directed] (0,2/3) -- +(-0.5774,1/3); 
  \draw[,directed] (0,2/3) -- +(0,-2/3); 
  
  \draw[,reverse directed] (0.5774,1/3) -- +(0.5774,-1/3); 
  \draw[,reverse directed] (0.5774,1/3) -- +(-0.5774,-1/3); 
  \draw[,reverse directed] (0.5774,1/3) -- +(0,2/3); 
  
  \draw[,reverse directed] (0.5774,-1/3) -- +(0.5774,1/3); 
  \draw[,reverse directed] (0.5774,-1/3) -- +(-0.5774,1/3); 
  \draw[,directed] (0.5774,-1/3) -- +(0,-2/3); 
  
  \draw[,directed] (0,-2/3) -- +(0.5774,-1/3); 
  \draw[,reverse directed] (0,-2/3) -- +(-0.5774,-1/3); 
  \draw[,reverse directed] (0,-2/3) -- +(0,2/3);  

  \draw[,directed] (-0.5774,-1/3) -- +(0.5774,1/3); 
  \draw[,directed] (-0.5774,-1/3) -- +(-0.5774,1/3); 
  \draw[,directed] (-0.5774,-1/3) -- +(0,-2/3);   
  
  \draw[,directed] (-0.5774,1/3) -- +(0.5774,-1/3); 
  \draw[,directed] (-0.5774,1/3) -- +(-0.5774,-1/3); 
  \draw[,reverse directed] (-0.5774,1/3) -- +(0,2/3);   

  \path (0,1.3) node{\footnotesize$c$}
        [rotate=-60] node{\footnotesize$B$}
        [rotate=-60] node{\footnotesize$a$}
        [rotate=-60] node{\footnotesize$C$}
        [rotate=-60] node{\footnotesize$b$}
        [rotate=-60] node{\footnotesize$A$};       
\end{tikzpicture}
}

\sbox{\SixOri}
{
\begin{tikzpicture}
\foreach \x in {0,2,4}{
 \draw[,directed,xshift=\x cm] (0,1)-- +(-0.5774,-1) ;
 \draw[,directed,xshift=\x cm] (0,1)-- +(0.5774,-1);
 \draw[,directed,xshift=\x cm] (0,1)-- +(0,-2/3); 
 \draw[,directed,xshift=\x cm] (-0.5774,0)--(0.5774,0);
 \fill[black!100] [xshift=\x cm] (0,1) circle(0.3ex)
      (0,1/3) circle(0.3ex)
      (-0.5774,0) circle(0.3ex)
      (0.5774,0) circle(0.3ex);
 
 \draw[,reverse directed,shift={(\x ,1.8)}] (0,0)-- +(-0.5774,1) ; 
 \draw[,reverse directed,shift={(\x ,1.8)}] (0,0)-- +(0.5774,1);
 \draw[,reverse directed,shift={(\x ,1.8)}] (0,0)-- +(0,2/3); 
 \draw[,directed,shift={(\x ,1.8)}] (-0.5774,1)--(0.5774,1);
 \fill[black!100] [shift={(\x ,1.8)}] (0,0) circle(0.3ex)
      (0,2/3) circle(0.3ex)
      (-0.5774,1) circle(0.3ex)
      (0.5774,1) circle(0.3ex);
}
 \draw[,reverse directed] (-0.5774,0)-- +(0.5774,1/3); 
  \draw[,directed] (-0.5774+2,0)-- +(0.5774,1/3);
   \draw[,directed] (-0.5774+4,0)-- +(0.5774,1/3);
 \draw[,reverse directed] (0.5774,0)-- +(-0.5774,1/3);
  \draw[,directed] (0.5774+2,0)-- +(-0.5774,1/3);
   \draw[,reverse directed] (0.5774+4,0)-- +(-0.5774,1/3);
   
 \draw[,reverse directed,shift={(0 ,1.8)}] (0,2/3)-- +(0.5774,1/3);
   \draw[,reverse directed,shift={(0 ,1.8)}] (0,2/3)-- +(-0.5774,1/3);
 \draw[,directed,shift={(2 ,1.8)}] (0,2/3)-- +(0.5774,1/3);
   \draw[,directed,shift={(2 ,1.8)}] (0,2/3)-- +(-0.5774,1/3);
 \draw[,directed,shift={(4 ,1.8)}] (0,2/3)-- +(0.5774,1/3);
   \draw[,reverse directed,shift={(4 ,1.8)}] (0,2/3)-- +(-0.5774,1/3);
 \path (0,-.3) node {\footnotesize$A$}
       ++(2,0) node {\footnotesize$B$}
       ++(2,0) node  {\footnotesize$C$}
       ++(0,1.8) node  {\footnotesize$c$}
       ++(-2,0) node  {\footnotesize$b$}
       ++(-2,0) node  {\footnotesize$a$};
\end{tikzpicture}
}

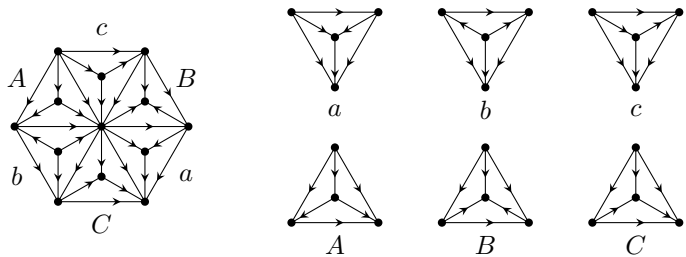
\begin{figure}[!htbp]
\begin{center}
 \begin{picture}(250,90)
\put(0,8){\usebox{\AsixOri}}
\put(110,0){\usebox{\SixOri}}
\end{picture}
\caption{\label{Hori} The semi-transitive orientation of the maximum subdivision of the graph $H$, 
         and six orientations of the subdivided cells.}
\end{center}
\end{figure}

To prove Theorem \ref{subdivtrigrid}, we will describe an orientation of a subdivision of a triangular grid graph
and then prove that this orientation, already used by us in Proposition~\ref{prop-H}, is semi-transitive. First, define a correspondence between the types of boundary
edges and the types of orientations of subdivided cells shown in Figure~\ref{Hori} as following:
NW (respectively, NE, S, SE, SW, N) corresponds to $A$ (respectively, $B$, $C$, $a$, $b$, $c$).

For a subdivision $G'$ of a triangular grid graph $G$ without subdivided interior cell, we direct the edges  of $G'$ as follows:
\begin{enumerate}
 \item Direct all horizontal edges from left to right, and the edges forming $60^\circ$, $90^\circ$, or $120^\circ$ with a horizontal line from top to bottom. We refer to the obtained arcs as {\em grid arcs}.
 \item Direct the edges located inside subdivided cells consistently with an orientation shown in Figure~\ref{Hori} that corresponds to one of the types in the property set of the cell. The obtained non-grid-arcs are called {\em interior arcs}.
\end{enumerate}
If an orientation of a subdivision of $G'$ satisfies the two conditions above, then we say that the orientation is {\it smart}. 

For example, for the subdivision of $K'$ shown in Figure \ref{BounEdgeOri}, the property sets of the subdivided cells $123$ and $567$ are, respectively, 
\{SW, SE\} and \{NW, NE\}. Thus, referring to Figure~\ref{Hori}, the orientations of edges for the cell $123$ can be chosen to be of type $a$ or $b$, while for the cell $567$ of type $A$ or $B$. In particular, the orientation shown in Figure~\ref{BounEdgeOri} is smart, and this orientation can be checked by inspection to be semi-transitive.

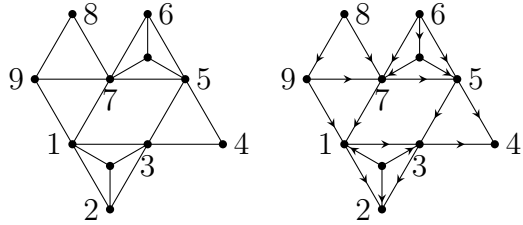
\begin{figure}[!htbp]
 \begin{center}
\begin{tikzpicture}
\draw (0,0)-- +(-0.5,0.866);
\draw (0,0)-- +(0.5,0.866);
\draw (0,0)-- +(1,0);
\draw (0,0)-- +(0.5,-0.2887);
\draw (0,0)-- +(0.5,-0.866);

\draw (-0.5,0.866)-- +(0.5,0.866);
\draw (-0.5,0.866)-- +(1,0);

\draw (0.5,0.866)-- +(-0.5,0.866);
\draw (0.5,0.866)-- +(0.5,0.866);
\draw (0.5,0.866)-- +(1,0);
\draw (0.5,0.866)-- +(0.5,0.2887);

\draw (1,1.732)--+(0,-0.5774);

\draw (1.5,0.866)-- +(-0.5,0.866);
\draw (1.5,0.866)-- +(-0.5,0.2887);
\draw (1.5,0.866)-- +(-0.5,-0.866);
\draw (1.5,0.866)-- +(0.5,-0.866);

\draw (1,0)-- +(1,0);
\draw (1,0)-- +(-0.5,-0.866);
\draw (1,0)-- +(-0.5,-0.2887);

\draw (0.5,-0.866)--+(0,0.5774);
      
\fill[black!100] (0,0) node[left]{$1$} circle(0.3ex)
         +(0.5,-0.2887) circle(0.3ex)
     ++(0.5,-0.866)node[left]{$2$} circle(0.3ex)
     ++(0.5,0.866)node[below]{$3$} circle(0.3ex)
     ++(1,0)node[right]{$4$} circle(0.3ex)
     ++(-0.5,0.866)node[right]{$5$} circle(0.3ex)
     ++(-0.5,0.866)node[right]{$6$} circle(0.3ex)
     ++(-0.5,-0.866) node[below]{$7$} circle(0.3ex)
         +(0.5,0.2887) circle(0.3ex)
     ++(-0.5,0.866)node[right]{$8$} circle(0.3ex)
     ++(-0.5,-0.866) node[left]{$9$} circle(0.3ex);
     
\end{tikzpicture}
\begin{tikzpicture}
\draw[,reverse directed] (0,0)-- +(-0.5,0.866);
\draw[,reverse directed] (0,0)-- +(0.5,0.866);
\draw[,directed] (0,0)-- +(1,0);
\draw[,reverse directed] (0,0)-- +(0.5,-0.2887);
\draw[,directed] (0,0)-- +(0.5,-0.866);

\draw[,reverse directed] (-0.5,0.866)-- +(0.5,0.866);
\draw[,directed] (-0.5,0.866)-- +(1,0);

\draw[,reverse directed] (0.5,0.866)-- +(-0.5,0.866);
\draw[,reverse directed] (0.5,0.866)-- +(0.5,0.866);
\draw[,directed] (0.5,0.866)-- +(1,0);
\draw[,reverse directed] (0.5,0.866)-- +(0.5,0.2887);

\draw[,directed] (1,1.732)--+(0,-0.5774);

\draw[,reverse directed] (1.5,0.866)-- +(-0.5,0.866);
\draw[,reverse directed] (1.5,0.866)-- +(-0.5,0.2887);
\draw[,directed] (1.5,0.866)-- +(-0.5,-0.866);
\draw[,directed] (1.5,0.866)-- +(0.5,-0.866);

\draw[,directed] (1,0)-- +(1,0);
\draw[,directed] (1,0)-- +(-0.5,-0.866);
\draw[,reverse directed] (1,0)-- +(-0.5,-0.2887);

\draw[,reverse directed] (0.5,-0.866)--+(0,0.5774);
      
\fill[black!100] (0,0) node[left]{$1$} circle(0.3ex)
         +(0.5,-0.2887) circle(0.3ex)
     ++(0.5,-0.866)node[left]{$2$} circle(0.3ex)
     ++(0.5,0.866)node[below]{$3$} circle(0.3ex)
     ++(1,0)node[right]{$4$} circle(0.3ex)
     ++(-0.5,0.866)node[right]{$5$} circle(0.3ex)
     ++(-0.5,0.866)node[right]{$6$} circle(0.3ex)
     ++(-0.5,-0.866) node[below]{$7$} circle(0.3ex)
         +(0.5,0.2887) circle(0.3ex)
     ++(-0.5,0.866)node[right]{$8$} circle(0.3ex)
     ++(-0.5,-0.866) node[left]{$9$} circle(0.3ex);
     
\end{tikzpicture}
\caption{\label{BounEdgeOri}  The subdivision $K'$ of the graph $K$ and one of its semi-transitive orientations.}
\end{center}
\end{figure}

Note that a smart orientation may involve eight types of edges, that we call $a_1$, $a_2$, etc, $a_8$; see Figure~\ref{arcs}.

 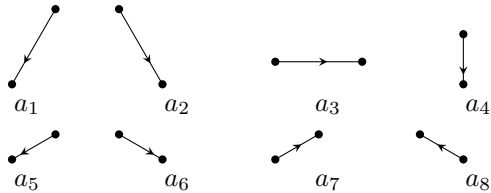
\begin{figure}[!htbp]
 \begin{center}
\begin{tikzpicture}
 \draw[,reverse directed,shift={(0,1)}] (0,0)--+(0.5774,1);
 \draw[,reverse directed,shift={(2,1)}] (0,0)--+(-0.5774,1);
 \draw[,directed,shift={(4-.5,1.3)}] (0,0)-- +(0.5774*2,0);
 \draw[,reverse directed,shift={(6,1)}] (0,0)-- +(0,2/3);
 \draw[,reverse directed,shift={(0,0)}] (0,0)-- +(0.5774,1/3);
 \draw[,reverse directed,shift={(2,0)}] (0,0)-- +(-0.5774,1/3);
 \draw[,directed,shift={(4-.5,0)}] (0,0)-- +(0.5774,1/3);
 \draw[,directed,shift={(6,0)}] (0,0)-- +(-0.5774,1/3);

 \fill[black!100] (0,1) circle(0.3ex)
                  +(0.5774,1)circle(0.3ex)
                  (2,1)circle(0.3ex)
                  +(-0.5774,1)circle(0.3ex)
                  (4-.5,1.3)circle(0.3ex)
                  +(0.5774*2,0)circle(0.3ex)
                  (6,1)circle(0.3ex)
                  +(0,2/3)circle(0.3ex)
                  (0,0)circle(0.3ex)
                  +(0.5774,1/3)circle(0.3ex)
                  (2,0)circle(0.3ex)
                  +(-0.5774,1/3)circle(0.3ex)
                  (4-.5,0)circle(0.3ex)
                  +(0.5774,1/3)circle(0.3ex)
                  (6,0)circle(0.3ex)
                  +(-0.5774,1/3)circle(0.3ex);
 \path   (.2,.7) node{\footnotesize$a_1$}
       ++(2,0) node{\footnotesize$a_2$}
       ++(2,0) node{\footnotesize$a_3$}
       ++(2,0) node{\footnotesize$a_4$}
       ++(0,-1) node{\footnotesize$a_8$}
       ++(-2,0) node{\footnotesize$a_7$}
       ++(-2,0) node{\footnotesize$a_6$}
       ++(-2,0) node{\footnotesize$a_5$};
\end{tikzpicture}
\caption{\label{arcs} Eight types of oriented edges.}
\end{center}
\end{figure}

There are a number of properties that any smart orientation satisfies. Three of these properties, that are easy to see, are listed below. We will be using them, sometimes implicitly by considering fewer subcases, in the proof of Lemma~\ref{LemSmartOri}:   

\begin{itemize}
\item No directed path can connect a vertex on a horizontal line to another vertex to the left of it on the same line.
\item No directed path can get from a horizontal line to a higher horizontal line.
\item The only situation in which a directed path can go down from a horizontal line and return back to it is when the upper interior arcs of type $c$ in Figure~\ref{Hori} are involved.
\end{itemize}

\begin{lem}\label{LemSmartOri-cycle-free}
No smart orientation of the boundary subdivision of a triangular grid graph can have a directed cycle.
\end{lem}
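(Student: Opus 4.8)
\textbf{Proof plan for Lemma~\ref{LemSmartOri-cycle-free}.}

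The plan is to argue by contradiction: suppose a smart orientation of a boundary subdivision admits a directed cycle $\gamma$, and derive an impossibility from the three bulleted properties of smart orientations listed above together with a careful look at what type-$c$ interior arcs can do. First I would observe that by the first bullet, along any directed walk confined to a single horizontal line the vertices move strictly to the right, so a directed cycle cannot stay on one horizontal line. Hence $\gamma$ must change horizontal levels. By the second bullet, $\gamma$ can never move to a strictly higher horizontal line; so every grid arc and every interior arc of $\gamma$ either stays on its level or moves strictly down. Since $\gamma$ is a cycle, its total vertical displacement is zero, which forces $\gamma$ to use some arc that moves \emph{up} between horizontal lines. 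The only arcs available for this are interior arcs, and by the third bullet the only interior arcs that let a directed path descend from a horizontal line and come back up are the upper type-$c$ arcs inside a subdivided cell (these are the arcs $a_5$ and $a_6$, or $a_7$ and $a_8$ in the type-$c$ orientation of Figure~\ref{Hori}).

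Next I would pin down the local picture at such a ``return'' point. A type-$c$ orientation occurs only in a cell that is subdivided and whose property set contains type N; by the definition of a boundary subdivision every subdivided cell is a boundary cell, and a boundary cell with an N boundary edge is an upward-pointing triangle whose top edge is on the boundary, so no cell sits directly above it on the next horizontal line up. I would then trace $\gamma$ through this configuration: to ``return'' to the horizontal line $\ell$ via a type-$c$ cell, $\gamma$ must enter the center vertex of that cell from below (using arc $a_7$ or $a_8$) and leave it upward to a vertex of $\ell$ (using arc $a_5$ or $a_6$). But the two top vertices of a type-N boundary cell are the left and right endpoints of the cell's top boundary edge, and after reaching such a top vertex $v$ the orientation forces every outgoing arc at $v$ to be a grid arc, hence to stay on $\ell$ or go down — and, crucially, the two top vertices of this cell have no neighbour on $\ell$ strictly to the left of themselves that is reachable, because the cell is the ``local maximum'' of the boundary. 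Making this precise, I would show that once $\gamma$ uses a type-$c$ return it is stuck: to close up it must get strictly to the left along $\ell$ or return upward again, and the boundary condition on N-cells forbids a second such return from producing a net leftward move. More concretely, each type-$c$ detour causes a net rightward shift along $\ell$ (you leave the cell at its top-right vertex having entered the region from the left), so composing finitely many of them can never bring $\gamma$ back to its starting vertex.

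The cleanest way to package this is probably a monovariant argument: assign to each vertex a pair (horizontal level, horizontal coordinate) ordered lexicographically with level decreasing and coordinate increasing being the ``allowed'' directions, and check arc-by-arc over the eight arc types $a_1,\dots,a_8$ of Figure~\ref{arcs}, plus the horizontal grid arcs, that every arc of a smart orientation strictly increases this monovariant \emph{except} possibly the type-$c$ upper arcs; then handle the type-$c$ arcs separately using the N-boundary geometry to show they too cannot be part of a cycle. I expect the main obstacle to be the bookkeeping in this last step: one must verify rigorously that an up-going type-$c$ arc always lands on a vertex from which, given the forced grid orientation and the fact that the cell is a boundary cell with no cell above it, there is no directed continuation that recoups the horizontal loss — i.e.\ that the only vertices $\gamma$ can reach after such an arc lie weakly to the right, so the monovariant is still globally non-decreasing and a cycle is impossible. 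Once that is established, acyclicity is immediate, and we are done.
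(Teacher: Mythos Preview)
Your plan is quite different from the paper's proof, and it contains several factual errors that would need to be repaired before it could be completed.

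The paper's argument is a short reduction that does not invoke the three bulleted properties at all. One first notes that a directed cycle on grid arcs alone is impossible (grid arcs never increase the $y$-coordinate, and those with $\Delta y=0$ strictly increase $x$). If a hypothetical directed cycle $C$ contains an interior arc, then since every interior arc is incident with a center vertex of degree three whose neighbours all lie in the same cell, that arc has an adjacent interior arc in $C$ lying in the same cell; say these are $x\to y$ and $y\to z$ with $y$ the center. Inspecting the six cell orientations of Figure~\ref{Hori} shows that in every such case the grid arc $x\to z$ is also present, so replacing $x\to y\to z$ by $x\to z$ yields a shorter directed cycle $C'$. Iterating removes all interior arcs and produces a cycle on grid arcs only, a contradiction.

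Your monovariant route, by contrast, must contend with the fact that arcs of types $a_7$ and $a_8$ \emph{increase} the $y$-coordinate, so the assertion that ``every grid arc and every interior arc of $\gamma$ either stays on its level or moves strictly down'' is false as stated. Several of the geometric claims are also off: a cell with a type-N boundary edge is a \emph{downward}-pointing triangle (horizontal edge on top, apex below --- see the cell $457$ in $H$); in a type-$c$ cell one enters the center from \emph{above} via top-left $\to$ center (an $a_6$ arc) and exits upward to top-right (an $a_7$ arc), so your $a_5,a_6$/$a_7,a_8$ roles are swapped; and you have omitted the type-$C$ cell, whose center sits \emph{above} the base line and admits bottom-left $\to$ center $\to$ bottom-right, an ``up-then-down'' detour not covered by the third bullet. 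These can all be fixed, but once you pair each upward interior arc with its forced companion through the center and observe that the net effect is a rightward grid step, you are effectively carrying out the paper's contraction argument --- which does the whole thing in two lines.
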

\begin{proof} It is straightforward to see that no directed cycle is possible on just grid arcs, that is, when no interior arc is involved. Thus, if a directed cycle $C$ exists, then it must involve an interior arc.

Further, note that if $e_1$ is an interior arc in $C$, then there must exist an interior arc $e_2$ that is located in the same cell as $e_1$, and $e_1$ and $e_2$ are next to each other in $C$. Without loss the generality, assume that $e_1=x\rightarrow y$ and $e_2=y\rightarrow z$. But then, looking at the six types of orientations of subdivided cells presented in Figure~\ref{Hori}, we see that $e=x\rightarrow y$ is an arc in the oriented graph. Thus, we see that $C'$ obtained from $C$ by removing $e_1$ and $e_2$ and including $e$ is still a directed cycle. Continuing in this manner, we can eliminate all interior arcs and show that there exists a directed cycle containing only grid arcs, which is impossible. 

Thus, $C$ cannot exist, that is, any smart orientation is acyclic.  
\end{proof}

\begin{lem}\label{LemSmartOri}
No smart orientation of the boundary subdivision of a triangular grid graph can contain a shortcut.
\end{lem}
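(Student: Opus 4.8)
The plan is to show that a hypothetical shortcut forces a directed path with geometric properties incompatible with a smart orientation. Suppose for contradiction that a smart orientation of a boundary subdivision $G'$ contains a shortcut; that is, there is a directed path $P: v_1 \rightarrow v_2 \rightarrow \cdots \rightarrow v_k$ together with the arc $v_1 \rightarrow v_k$, but some arc $v_i \rightarrow v_j$ with $i < j$ is missing. Since the orientation is acyclic by Lemma~\ref{LemSmartOri-cycle-free}, $P$ is a genuine path. First I would normalize $P$ by applying the same interior-arc elimination move used in the proof of Lemma~\ref{LemSmartOri-cycle-free}: whenever two consecutive arcs $x \rightarrow y \rightarrow z$ of $P$ lie in the same subdivided cell, the arc $x \rightarrow z$ is also present (by inspection of the six cell orientations in Figure~\ref{Hori}), so we may shortcut through the center vertex. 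After this reduction we may assume no two consecutive arcs of $P$ both pass through the same cell-center, which tightly restricts where interior arcs appear in $P$.

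Next I would use the three bulleted properties of smart orientations stated just before Lemma~\ref{LemSmartOri-cycle-free} to locate $v_1$ and $v_k$ relative to the horizontal grid lines. Because no directed path goes from a horizontal line to a strictly higher one, and the only way a path leaves a horizontal line downward and returns is via an upper interior arc of type $c$, the path $P$ either stays weakly below its starting line, or makes excursions of a very controlled shape. Combined with the arc $v_1 \rightarrow v_k$, which must itself be a grid arc or an interior arc of one of the eight types $a_1,\dots,a_8$ in Figure~\ref{arcs}, this pins down $v_1$ and $v_k$ to lie in or adjacent to a single cell (or a single pair of adjacent cells). In particular the shortcut arc $v_1 \rightarrow v_k$ has bounded "length" in the grid, so $P$ is short: I expect $k$ is at most $5$ or so, with the extremal cases being exactly the configuration $A''$ of Figure~\ref{nonRepTri} and the cell orientations themselves. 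The bulk of the argument is then a finite case analysis: for each type of shortcut arc $v_1 \rightarrow v_k$ (grid arc of each of the six grid directions, and interior arc of each type), enumerate the possible directed paths $P$ from $v_1$ to $v_k$ using grid arcs and at most the allowed interior arcs, and check in each case that all required arcs $v_i \rightarrow v_j$ are in fact present — i.e. that the induced subgraph on $\{v_1,\dots,v_k\}$ is transitive — so no shortcut actually occurs.

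The main obstacle, and where the real work lies, is the interface between interior arcs and grid arcs: a path may enter a subdivided cell through one boundary vertex, pass through the center, and leave through another boundary vertex, and I must verify that the orientation-type constraints (only the types in the property set of the cell are allowed, with the NW/NE/S/SE/SW/N $\leftrightarrow$ $A$/$B$/$C$/$a$/$b$/$c$ correspondence) are strong enough to forbid a path that "uses" a cell in two different places, or that chains through several cells in a way that creates a missing chord. Here the fact that we are in a \emph{boundary} subdivision is essential: an interior cell could be subdivided with an arbitrary orientation type unrelated to its neighbors, which is precisely what allows $A''$ to arise; but for a boundary cell the type is forced to match a boundary edge, and the geometry of a boundary edge (it borders the outside of $G$) prevents the path from wrapping around. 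I would organize the case analysis by first handling paths containing no interior arc (these are pure grid paths and transitivity is immediate from the monotone left-to-right / top-to-bottom structure), then paths with exactly one interior arc, then paths with two interior arcs lying in distinct cells; after the normalization step, no further cases remain. In each case one exhibits the missing-chord candidate $v_i \rightarrow v_j$ and checks it is present, completing the contradiction and hence the proof.

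\begin{proof}[Proof sketch]
Suppose some smart orientation of a boundary subdivision contains a shortcut: a directed path $v_1 \rightarrow \cdots \rightarrow v_k$, the arc $v_1 \rightarrow v_k$, and a missing arc $v_i \rightarrow v_j$. Using the move from the proof of Lemma~\ref{LemSmartOri-cycle-free} (if $x \rightarrow y \rightarrow z$ lie in one subdivided cell then $x \rightarrow z$ is present), reduce to the case where no two consecutive arcs of the path share a cell-center. By the three listed properties of smart orientations, the path is monotone relative to the horizontal grid lines except for type-$c$ excursions, which forces $v_1$ and $v_k$ into a bounded region; since the shortcut arc $v_1 \rightarrow v_k$ is either a grid arc or one of the eight interior arc types of Figure~\ref{arcs}, the path has bounded length. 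A finite inspection of the resulting configurations — grouped by the type of the arc $v_1 \rightarrow v_k$ and by the number of interior arcs on the path (zero, one, or two in distinct cells) — shows that in every case the induced subgraph on $\{v_1,\dots,v_k\}$ is transitive, contradicting the existence of the missing arc $v_i \rightarrow v_j$. The crucial point throughout is that for a boundary cell the admissible interior-arc types are constrained by the cell's property set, which rules out the wrap-around that produces the forbidden subgraph $A''$ in an interior cell.
\end{proof}
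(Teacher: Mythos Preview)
Your proposal has a genuine gap in the normalization step. You write: ``if $x \to y \to z$ lie in one subdivided cell then $x \to z$ is present, so reduce to the case where no two consecutive arcs of the path share a cell-center.'' But this reduction does not preserve the shortcut property. A cell-center $y$ has degree~$3$ and is adjacent only to the three grid vertices of its cell; hence for any $v_j$ on $P$ lying outside that cell, the chord between $y$ and $v_j$ is automatically absent. In other words, once a directed path of length $\geq 3$ passes through a cell-center as an \emph{internal} vertex, that center is precisely the vertex most likely to witness a missing chord. Deleting $y$ from $P$ removes the witness, so proving that the shortened path $P'$ induces a transitive subgraph says nothing about the original $P$. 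To make the reduction valid you would need to show separately that no directed path of length $\geq 3$ from $v_1$ to $v_k$ (with the arc $v_1 \to v_k$ present) can pass through a cell-center internally --- and that is essentially the content of the lemma itself. Your subsequent claim that $k \leq 5$ ``or so'' and the trichotomy ``zero, one, or two interior arcs'' both rest on this invalid reduction.

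The paper's proof works the other way round and avoids the issue entirely: it fixes the type of the shortcut arc $e = v_1 \to v_k$ (one of the eight types $a_1,\dots,a_8$ in Figure~\ref{arcs}) and then analyses which arcs can possibly serve as the \emph{first} or \emph{last} arc of a hypothetical path $P$ of length $\geq 3$, using the boundary-cell constraint (the orientation type of a subdivided cell must correspond to one of its boundary-edge types) to eliminate every candidate. Your monotonicity observations are morally what makes that local analysis terminate, but the paper never needs to bound $k$ globally or excise center vertices from $P$: a contradiction already appears within one or two steps of $P$ from either endpoint. If you want to repair your argument, the cleanest route is to drop the normalization altogether and organise the case analysis by the type of $e$, which is exactly the paper's approach.
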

\begin{proof}
In a smart orientation, there are eight types of arcs shown in Figure~\ref{arcs}, and we
will prove that no arc $e=t\rightarrow h$ can be a shortcut. While dealing with smart orientations, sometimes it is convenient to pay attention to coordinates $(x_v,y_v)$ of a vertex $v$ coming from the definition of $T^{\infty}$. The coordinates allow for any two vertices to determine which one of them is to the left of the other one and/or higher than the other one.

\begin{figure}[!htbp]
 \begin{center}
\begin{tikzpicture}[scale=1]
\clip (-1.5,-0.7) rectangle (2.5,1.5);
\foreach \x in {-5,...,5} 
 \foreach \y in {-5,...,5}
 { \fill[gray!100] (\x + .5* \y, 0.866*\y) circle(0.3ex);
 }
\foreach \x in {-5,...,5} 
{ \draw[gray,very thin] (\x,-0.866*2)-- +(10,17.32);
  \draw[gray,very thin] (\x,-0.866*2)-- +(-10,17.32);
}
\foreach \y in {-5,...,5} 
{ \draw[gray,very thin] (-3,0.866*\y)-- +(15,0);
}
\draw[,reverse directed] (0,0)-- +(-0.5,-0.2887);
\draw[,reverse directed] (0,0)-- +(-1,0);
\draw[,reverse directed] (0,0)-- +(-0.5,0.2887);
\draw[,reverse directed] (0,0)-- +(-0.5,0.866);
\draw[,reverse directed] (0,0)-- +(0,0.5774);
\draw[thick,reverse directed] (0,0)-- +(0.5,0.866);
\draw[,reverse directed] (0,0)-- +(0.5,0.2887);
\draw[,reverse directed] (0,0)-- +(0.5,-0.2887);
\draw[,directed] (0.5,0.866)-- +(-0.5,0.2887);
\draw[,directed] (0.5,0.866)-- +(-0.5,-0.2887);
\draw[,directed] (0.5,0.866)-- +(0.5,0.2887);
\draw[,directed] (0.5,0.866)-- +(1,0);
\draw[,directed] (0.5,0.866)-- +(0.5,-0.2887);
\draw[,directed] (0.5,0.866)-- +(0.5,-0.866);
\draw[,directed] (0.5,0.866)-- +(0,-0.5774);
\fill[black!100] (0,0)   node[below]{1} circle(0.3ex)
                 +(-0.5,-0.2887) node[left]{3} circle(0.3ex)
                 +(-1,0)node[left]{4} circle(0.3ex)
                 +(-0.5,0.2887)node[left]{5} circle(0.3ex)
                 +(-0.5,0.866)node[left]{6} circle(0.3ex)
                 +(0,0.5774)node[above]{7} circle(0.3ex)
                 +(0.5,0.2887)node[right]{8} circle(0.3ex)
                 +(0.5,-0.2887)node[right]{9} circle(0.3ex)
                 ++(0.5,0.866)node[above]{2} circle(0.3ex)
                    +(-0.5,0.2887) node[left]{10} circle(0.3ex)
                    +(0.5,0.2887) node[right]{11} circle(0.3ex)
                    +(1,0) node[right]{12} circle(0.3ex)
                    +(0.5,-0.2887) node[right]{13} circle(0.3ex)
                    +(0.5,-0.866) node[right]{14} circle(0.3ex)
                    ;          
\end{tikzpicture}
\caption{\label{arc1} The case when the arc $e=2\rightarrow 1$ is of type $a_1$.}
\end{center}
\end{figure}
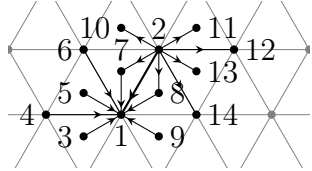

\begin{enumerate}
 \item[Case 1.] Suppose that the arc $e=2\rightarrow 1$ is of type $a_1$, as shown in Figure \ref{arc1}. 
 If $a_1$ is a shortcut, then there exists a directed path $P$ of length at least 3 from $2$ to $1$ 
 (this path does not involve $e$). Suppose that $P$ ends with $e'$. Then $e'=m\rightarrow 1$ can possibly
 be $3\rightarrow1$, $4\rightarrow1$, $5\rightarrow1$, $6\rightarrow1$, $7\rightarrow1$, $8\rightarrow1$, or $9\rightarrow1$ 
 (of type $a_7$, $a_3$, $a_6$, $a_2$, $a_5$, or $a_8$, respectively). 
 However, $e'$ cannot be $3\rightarrow1$, $4\rightarrow1$, $5\rightarrow1$, $6\rightarrow 1$ or $7\rightarrow 1$, because
 in each of these cases $x_m<x_2$ forcing $P$ to begin with an arc $e''=2\rightarrow s$ of type $a_5$ or $a_8$
 ($2\rightarrow7$ or $2\rightarrow10$ in Figure \ref{arc1}), which is impossible by the following reasons. 
 The arc $2\rightarrow7$ is in orientation of type $a$ forcing $P$ be of length 2, contradiction,
 and the arc $2\rightarrow10$ is in orientation of type $B$, so that $10$ would be a sink, contradiction. 
 On the other hand, $e'$ is 
 not $8\rightarrow1$ or $9\rightarrow1$, since the arc coming to
 $8$ must be of type $a_4$ forcing $P$ be of length 2, contradiction, while the arc $9\rightarrow1$ is in 
 orientation of type $b$, so that $9$ would be a source, contradiction.
 Thus, $e$ is not a shortcut in this case. 

 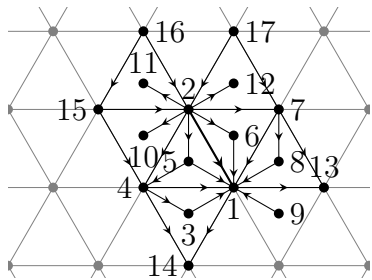
\begin{figure}[!htbp]
 \begin{center}
\begin{tikzpicture}[scale=1.2]
\clip (-2.5,-1) rectangle (1.5,2);
\foreach \x in {-5,...,5} 
 \foreach \y in {-5,...,5}
 { \fill[gray!100] (\x + .5* \y, 0.866*\y) circle(0.3ex);
 }
\foreach \x in {-5,...,5} 
{ \draw[gray,very thin] (\x,-0.866*2)-- +(10,17.32);
  \draw[gray,very thin] (\x,-0.866*2)-- +(-10,17.32);
}
\foreach \y in {-5,...,5} 
{ \draw[gray,very thin] (-3,0.866*\y)-- +(15,0);
}
\draw[,reverse directed] (0,0)-- +(-0.5,-0.2887);
\draw[,reverse directed] (0,0)-- +(-1,0);
\draw[,reverse directed] (0,0)-- +(-0.5,0.2887);
\draw[thick,reverse directed] (0,0)-- +(-0.5,0.866);
\draw[,reverse directed] (0,0)-- +(0,0.5774);
\draw[,reverse directed] (0,0)-- +(0.5,0.866);
\draw[,reverse directed] (0,0)-- +(0.5,0.2887);
\draw[,reverse directed] (0,0)-- +(0.5,-0.2887);

\draw[,directed] (0,0)-- +(-0.5,-0.866);
\draw[,directed] (0,0)-- +(1,0);

\draw[,directed] (-0.5,0.866)-- +(-0.5,0.2887);
\draw[,directed] (-0.5,0.866)-- +(-0.5,-0.2887);
\draw[,directed] (-0.5,0.866)-- +(0.5,0.2887);
\draw[,directed] (-0.5,0.866)-- +(1,0);
\draw[,directed] (-0.5,0.866)-- +(0.5,-0.2887);
\draw[,directed] (-0.5,0.866)-- +(0,-0.5774);
\draw[,directed] (-0.5,0.866)-- +(-0.5,-0.866);
\draw[,reverse directed] (-0.5,0.866)-- +(-0.5,0.866);
\draw[,reverse directed] (-0.5,0.866)-- +(0.5,0.866);
\draw[,reverse directed] (-0.5,0.866)-- +(-1,0);

\draw[,reverse directed] (-1,0)-- +(-0.5,0.866);
\draw[,directed] (-1,0)-- +(0.5,-0.866);
\draw[,reverse directed] (-1,0)-- +(0.5,0.2887);
\draw[,directed] (-1,0)-- +(0.5,-0.2887);
\draw[,reverse directed] (-1.5,0.866)-- +(0.5,0.866);

\draw[,reverse directed] (0.5,0.866)-- +(-0.5,0.866);
\draw[,directed] (0.5,0.866)-- +(0.5,-0.866);
\draw[,directed] (0.5,0.866)-- +(0,-0.5774);
\fill[black!100] (0,0)   node[below]{1} circle(0.3ex)
                  (-0.5,-0.2887) node[below]{3} circle(0.3ex)
                  (-1,0)node[left]{4} circle(0.3ex)
                  (-0.5,0.2887)node[left]{5} circle(0.3ex)
                  (0,0.5774)node[right]{6} circle(0.3ex)
                  (0.5,0.866)node[right]{7} circle(0.3ex)
                  (0.5,0.2887)node[right]{8} circle(0.3ex)
                  (0.5,-0.2887)node[right]{9} circle(0.3ex)
                  (-0.5,-0.866)node[left]{14} circle(0.3ex)
                  (1,0)node[above]{13} circle(0.3ex)
                 (-0.5,0.866)node[above]{2} circle(0.3ex)
                    +(-0.5,-0.2887) node[below]{10} circle(0.3ex)
                    +(-0.5,0.2887) node[above]{11} circle(0.3ex)
                    +(0.5,0.2887) node[right]{12} circle(0.3ex)
                    +(-0.5,0.866)node[right]{16} circle(0.3ex)
                    +(0.5,0.866)node[right]{17} circle(0.3ex)
                    +(-1,0)node[left]{15} circle(0.3ex)
                    ;          
\end{tikzpicture}
\caption{\label{arc2} The case when the arc $e=2\rightarrow 1$ is of type $a_2$.}
\end{center}
\end{figure}
 \item[Case 2.] Suppose that the arc $e=2\rightarrow 1$ is of type $a_2$, shown in Figure \ref{arc2}. 
 If $a_2$ is a shortcut, then there exists a directed path $P$ of length at least 3 from $2$ to $1$ 
 (this path does not involve $e$). Suppose that $P$ begins with $e'$. Then $e'=2\rightarrow s$ can possibly
 be $2\rightarrow5$, $2\rightarrow6$, $2\rightarrow7$, $2\rightarrow10$, $2\rightarrow11$, or $2\rightarrow12$.
 \begin{enumerate}
  \item[Subcase 2.1] If $P$ begins with $2\rightarrow5$, then $P$ must be different from $2\rightarrow5\rightarrow 1$. Moreover, since the path $2\rightarrow5\rightarrow4\rightarrow1$ is transitive,
  then $P$ must go through 3 to 1 (going to 14 is not an option since $P$ would never be able to return to the horizontal line the vertex 1 is on). But then the subdivision of the cell 14(14) has the orientation of type $c$, while 
  $4\rightarrow1$ is not a boundary edge, contradicting with the definition
  of a smart orientation.
  \item[Subcase 2.2] If $P$ begins with $2\rightarrow6$, then the subdivision of the cell 127
  has the orientation of type $a$ or $c$.
  For the orientation of type $a$, $P$ is of length 2, contradiction. For the orientation of type $c$, 
  since the path $2\rightarrow7\rightarrow6\rightarrow1$ is transitive,
  $P$ must go through $8$ to $1$. Thus the subdivision of the cell 17(13)
  has the orientation of type $A$, while $7\rightarrow1$ is not a boundary edge, contradicting with the definition
  of a smart orientation.
  \item[Subcase 2.3] If $P$ begins with $2\rightarrow7$, then $P$ must go through $8$ to $1$. Similarly with the discussion in
  Subcase 2.2, it contradicts with the definition
  of a smart orientation.
  \item[Subcase 2.4] If $P$ begins with $2\rightarrow10$, $P$ must go through the arc $4\rightarrow1$ or $4\rightarrow3$, while either of them
  indicates that $2\rightarrow4$ is not a boundary edge, contradicting the subdivision of the cell 24(15).
  \item[Subcase 2.5] If $P$ begins with  $2\rightarrow11$, it indicates that the orientation of the subdivision of the cell 2(15)(16) is of type $B$.
  Thus the vertex 11 is a sink, contradiction.
  \item[Subcase 2.6] If $P$ begins with  $2\rightarrow12$, the subdivision of the cell 27(17) can possibly has the orientation of type $B$ or
  $C$. In the orientation of type $B$, the vertex 12 is a sink, contradiction. Thus the orientation is of type $C$,
  and $P$ must go through the arc $7\rightarrow1$ or $7\rightarrow8$, while either of them
  indicates that $7\rightarrow1$ is not a boundary edge, contradicting the subdivision of the cell 27(17).
 \end{enumerate}

 Thus, $e$ is not a shortcut in this case.

 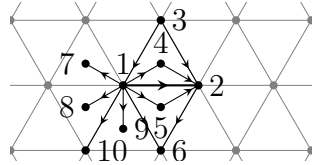
\begin{figure}[!htbp]
 \begin{center}
\begin{tikzpicture}[scale=1]
\clip (-1.5,-1) rectangle (2.5,1.1);
\foreach \x in {-5,...,5} 
 \foreach \y in {-5,...,5}
 { \fill[gray!100] (\x + .5* \y, 0.866*\y) circle(0.3ex);
 }
\foreach \x in {-5,...,5} 
{ \draw[gray,very thin] (\x,-0.866*2)-- +(10,17.32);
  \draw[gray,very thin] (\x,-0.866*2)-- +(-10,17.32);
}
\foreach \y in {-5,...,5} 
{ \draw[gray,very thin] (-3,0.866*\y)-- +(15,0);
}

\draw[, directed] (0,0)-- +(0.5,0.2887);
\draw[,reverse directed] (1,0)-- +(-0.5,0.2887);
\draw[thick,directed] (0,0)-- +(1,0);
\draw[,reverse directed] (0,0)-- +(0.5,0.866);
\draw[,directed] (0,0)-- +(-0.5,0.2887);
\draw[,directed] (0,0)-- +(-0.5,-0.2887);
\draw[,directed] (0,0)-- +(-0.5,-0.866);
\draw[,directed] (0,0)-- +(0,-0.5774);
\draw[,reverse directed] (1,0)-- +(-0.5,0.866);
\draw[,directed] (0,0)-- +(0.5,-0.866);
\draw[,directed] (1,0)-- +(-0.5,-0.866);
\draw[,directed] (0,0)-- +(0.5,-0.2887);
\draw[,reverse directed] (1,0)-- +(-0.5,-0.2887);
\fill[black!100] (0,0) node[above]{1} circle(0.3ex)
                   (0.5,0.2887) node[above]{4} circle(0.3ex)
                   (0.5,0.866)node[right]{3}circle(0.3ex)
                   (1,0)node[right]{2}circle(0.3ex)
                   (0.5,-0.2887) node[below]{5} circle(0.3ex)
                   (0.5,-0.866)node[right]{6}circle(0.3ex)
                   (-0.5,0.2887) node[left]{7} circle(0.3ex)
                   (-0.5,-0.2887) node[left]{8} circle(0.3ex)
                   (-0.5,-0.866)node[right]{10}circle(0.3ex)
                   (0,-0.5774) node[right]{9} circle(0.3ex);

\end{tikzpicture}
\caption{\label{arc3} The case when the arc $e=1\rightarrow 2$ is of type $a_3$.}
\end{center}
\end{figure} 

%

 \item[Case 3.] Suppose that the arc $e=1\rightarrow 2$ is of type $a_3$, as shown in Figure \ref{arc3}. 
 If $e$ is a shortcut, then there exists a directed path $P$ of length at least 3 from $1$ to $2$ 
 (this path does not involve $e$). 
 Suppose that $P$ begins with $e'$. Then $e'=1\rightarrow m$ can potentially
 be $1\rightarrow4$, $1\rightarrow5$, $1\rightarrow7$, $1\rightarrow8$, $1\rightarrow9$ or $1\rightarrow10$. 
 However, $e'$ cannot be $1\rightarrow8$, $1\rightarrow9$ or $1\rightarrow10$, because
 in each of these cases $P$ is forced to go through the vertex
 lying on the horizontal grid line below the vertex 2,
 which is impossible by 
 the properties of smart orientations listed above.
 Also, $e'$ cannot be $1\rightarrow7$, since $1\rightarrow7$ must be an arc in subdiveded cell with 
 orientation of type $B$ and therefore $7$ must be a sink, contradiction.
 If $e'$ is $1\rightarrow4$ or $1\rightarrow5$, then $P$ must be a path of length 2, contradiction.
 Thus, $e$ is not a shortcut in this case.

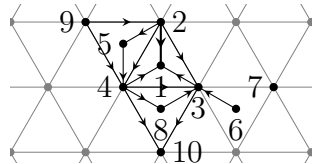
\begin{figure}[!htbp]
 \begin{center}
\begin{tikzpicture}[scale=1]
\clip (-1.5,-1) rectangle (2.5,1.1);
\foreach \x in {-5,...,5} 
 \foreach \y in {-5,...,5}
 { \fill[gray!100] (\x + .5* \y, 0.866*\y) circle(0.3ex);
 }
\foreach \x in {-5,...,5} 
{ \draw[gray,very thin] (\x,-0.866*2)-- +(10,17.32);
  \draw[gray,very thin] (\x,-0.866*2)-- +(-10,17.32);
}
\foreach \y in {-5,...,5} 
{ \draw[gray,very thin] (-3,0.866*\y)-- +(15,0);
}
\draw[,reverse directed] (0,0)-- +(0,0.5774);
\draw[,reverse directed] (0,0)-- +(0.5,0.866);
\draw[,directed] (0,0)-- +(0.5,0.2887);
\draw[,directed] (0,0)-- +(1,0);
\draw[,directed] (0,0)-- +(0.5,-0.2887);
\draw[,reverse directed] (0,0)-- +(-0.5,0.866);
\draw[,directed] (-0.5,0.866)-- +(1,0);
\draw[, directed] (0,0)-- +(0.5,-0.866);

\draw[,directed] (0.5,0.866)-- +(-0.5,-0.2887);
\draw[thick,directed] (0.5,0.866)-- +(0,-0.5774);
\draw[,directed] (0.5,0.866)-- +(0.5,-0.866);

\draw[,directed] (1,0)-- +(-0.5,0.2887);
\draw[,reverse directed] (1,0)-- +(-0.5,-0.2887);
\draw[,reverse directed] (1,0)-- +(0.5,-0.2887);
\draw[,directed] (1,0)-- +(-0.5,-0.866);

 \fill[black!100] (0,0) node[left]{4} circle(0.3ex)
                  (0,0.5774)node[left]{5} circle(0.3ex)
                  (0.5,0.866)node[right]{2}circle(0.3ex) 
                  (-0.5,0.866)node[left]{9}circle(0.3ex)
                  (0.5,-0.866)node[right]{10}circle(0.3ex)  
                  (0.5,0.2887)node[below]{1}circle(0.3ex)
                  (0.5,-0.2887)node[below]{8}circle(0.3ex)
                  (2,0) node[left]{7} circle(0.3ex)
                  (1,0)node[below]{3}circle(0.3ex)
                   +(0.5,-0.2887)node[below]{6}circle(0.3ex);

\end{tikzpicture}
\caption{\label{arc4} The case when the arc $e=2\rightarrow 1$ is of type $a_4$.}
\end{center}
\end{figure}

 \item[Case 4.] Suppose that the arc $e=2\rightarrow 1$ is of type $a_4$, as shown in Figure \ref{arc4}. 
 If $e$ is a shortcut, then there exists a directed path $P$ of length at least 3 from $2$ to $1$ 
 (this path does not involve $e$). Then $P$ can possibly end with 
 $3\rightarrow1$ or $4\rightarrow1$.
 There are two subcases:
 \begin{enumerate}
  \item[Subcase 4.1]  For ending with $3\rightarrow1$, the cell $234$ has the orientation $B$, so $2\rightarrow3$ is a boundary edge. 
  If $4\rightarrow3$ lies on $P$, then $5\rightarrow4$ and $2\rightarrow5$ must lie on $P$ (since the path $2\rightarrow 
  4\rightarrow 1 \rightarrow 3$ is transitive 
  and no path goes from right to left with Euclidean distance larger than 1).
  Hence the subdivided cell $249$
  has the orientation of type $a$ while $2\rightarrow4$ is not a boundary edge, contradicting the definition of a smart orientation.
  The arc $6\rightarrow3$ cannot lie on $P$, since existence of $6\rightarrow3$ implies vertex $6$ is a source by the definition of a smart orientation.
  If $8\rightarrow3$ lies on $P$ then so does $4\rightarrow8$, hence the cell containing $8$ has the orientation $c$, while $3\rightarrow4$ is not a boundary edge, 
  contradicting the definition of a smart orientation.
  \item[Subcase 4.2]  For ending with $4\rightarrow1$, since there is no directed path that goes from right to left with Euclidean distance larger than 1 in a smart 
  orientation, both $5\rightarrow4$ and $2\rightarrow5$ must lie on $P$, and hence the subdivided cell containing vertex $5$
  has the orientation of type $a$, while $2\rightarrow4$ is not a boundary edge, contradicting the definition of a smart orientation.
 \end{enumerate}
 
 Thus, $e$ is not a shortcut in this case. 

 \begin{figure}[!htbp]
 \begin{center}
\begin{tikzpicture}[scale=1]
\clip (-1.5,-1) rectangle (2.5,1.1);
\foreach \x in {-5,...,5} 
 \foreach \y in {-5,...,5}
 { \fill[gray!100] (\x + .5* \y, 0.866*\y) circle(0.3ex);
 }
\foreach \x in {-5,...,5} 
{ \draw[gray,very thin] (\x,-0.866*2)-- +(10,17.32);
  \draw[gray,very thin] (\x,-0.866*2)-- +(-10,17.32);
}
\foreach \y in {-5,...,5} 
{ \draw[gray,very thin] (-3,0.866*\y)-- +(15,0);
}

\draw[thick,reverse directed] (0,0)-- +(0.5,0.2887);
\draw[,reverse directed] (1,0)-- +(-0.5,0.2887);
\draw[,directed] (0,0)-- +(1,0);

\draw[,reverse directed] (0,0)-- +(0.5,0.866);
\draw[,reverse directed] (1,0)-- +(-0.5,0.866);
\draw[,directed] (0,0)-- +(0.5,-0.866);
\draw[,directed] (1,0)-- +(-0.5,-0.866);
\draw[,directed] (1,0)-- +(-0.5,-0.2887);

\fill[black!100] (0,0) node[left]{1} circle(0.3ex)
                   (0.5,0.2887) node[above]{2} circle(0.3ex)
                   (0.5,0.866)node[right]{5}circle(0.3ex)
                   (1,0)node[right]{3}circle(0.3ex)
                   (0.5,-0.2887) node[below]{4} circle(0.3ex)
                   (0.5,-0.866)node[right]{6}circle(0.3ex);

\end{tikzpicture}
\caption{\label{arc5} The case when the arc $e=2\rightarrow 1$ is of type $a_5$.}
\end{center}
\end{figure}
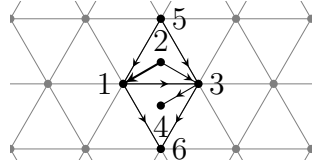

 \item[Case 5.] Suppose that the arc $e=2\rightarrow 1$ is of type $a_5$, as shown in Figure \ref{arc5}. 
 If $e$ is a shortcut, then there exists a directed path $P$ of length at least 3 from $2$ to $1$ 
 (this path does not involve $e$). Then $P$ must begin with the arc $2\rightarrow3$. However, 
 there is no path going from right to left with Euclidean distance larger than 1, thus, $e$ is not a shortcut. 
 
 \begin{figure}[!htbp]
 \begin{center}
\begin{tikzpicture}[scale=1]
\clip (-1.5,-1) rectangle (2.5,1.3);
\foreach \x in {-5,...,5} 
 \foreach \y in {-5,...,5}
 { \fill[gray!100] (\x + .5* \y, 0.866*\y) circle(0.3ex);
 }
\foreach \x in {-5,...,5} 
{ \draw[gray,very thin] (\x,-0.866*2)-- +(10,17.32);
  \draw[gray,very thin] (\x,-0.866*2)-- +(-10,17.32);
}
\foreach \y in {-5,...,5} 
{ \draw[gray,very thin] (-3,0.866*\y)-- +(15,0);
}

\draw[,reverse directed] (0,0)-- +(0.5,0.2887);
\draw[thick,reverse directed] (1,0)-- +(-0.5,0.2887);
\draw[,directed] (0,0)-- +(1,0);

\draw[,reverse directed] (0,0)-- +(0.5,0.866);
\draw[,reverse directed] (1,0)-- +(-0.5,0.866);
\draw[,directed] (0,0)-- +(0.5,-0.866);
\draw[,directed] (1,0)-- +(-0.5,-0.866);
\draw[,directed] (0,0)-- +(0.5,-0.2887);
\draw[,reverse directed] (1,0)-- +(-0.5,-0.2887);

\fill[black!100] (0,0) node[left]{3} circle(0.3ex)
                   (0.5,0.2887) node[above]{2} circle(0.3ex)
                   (0.5,0.866)node[right]{5}circle(0.3ex)
                   (1,0)node[right]{1}circle(0.3ex)
                   (0.5,-0.2887) node[below]{4} circle(0.3ex)
                   (0.5,-0.866)node[right]{6}circle(0.3ex);

\end{tikzpicture}
\caption{\label{arc6} The case when the arc $e=2\rightarrow 1$ is of type $a_6$.}
\end{center}
\end{figure}
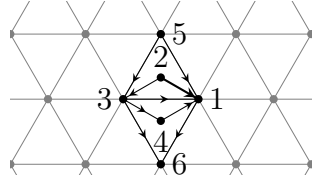

\item[Case 6.] Suppose that the arc $e=2\rightarrow 1$ is of type $a_6$, as shown in Figure \ref{arc6}. 
 If $e$ is a shortcut, then there exists a directed path $P$ of length at least 3 from $2$ to $1$ 
 (this path does not involve $e$). Then $P$ must begin with the arc $2\rightarrow3$. There are only two possibilities here: $P$ is either $2\rightarrow 3\rightarrow 1$, or $2\rightarrow 3\rightarrow 4\rightarrow 1$. In the former case, we do not have a shortcut, while in the later case there is a contradiction with orientation of the cell 136, since the arc $3\rightarrow 1$ is not boundary.
 Thus, $e$ is not a shortcut in this case.

 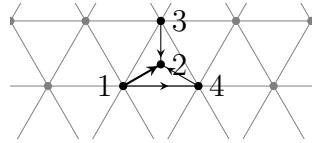
\begin{figure}[!htbp]
 \begin{center}
\begin{tikzpicture}[scale=1]
\clip (-1.5,-0.7) rectangle (2.5,1.1);
\foreach \x in {-5,...,5} 
 \foreach \y in {-5,...,5}
 { \fill[gray!100] (\x + .5* \y, 0.866*\y) circle(0.3ex);
 }
\foreach \x in {-5,...,5} 
{ \draw[gray,very thin] (\x,-0.866*2)-- +(10,17.32);
  \draw[gray,very thin] (\x,-0.866*2)-- +(-10,17.32);
}
\foreach \y in {-5,...,5} 
{ \draw[gray,very thin] (-3,0.866*\y)-- +(15,0);
}
\draw[,reverse directed] (0.5,0.2887)-- +(0,0.5774);
\draw[thick,reverse directed] (0.5,0.2887)-- +(-0.5,-0.2887);
\draw[,reverse directed] (0.5,0.2887)-- +(0.5,-0.2887);
\draw[,directed] (0,0)-- +(1,0);

 \fill[black!100] (0.5,0.2887) node[right]{2} circle(0.3ex)
                 +(0,0.5774)node[right]{3} circle(0.3ex)               
                 +(-0.5,-0.2887)node[left]{1}circle(0.3ex)
                 +(0.5,-0.2887)node[right]{4}circle(0.3ex);

\end{tikzpicture}
\caption{\label{arc7} The case when the arc $e=1\rightarrow 2$ is of type $a_7$.}
\end{center}
\end{figure}

\item[Case 7.] Suppose that the arc $e=1\rightarrow 2$ is of type $a_7$, shown in Figure \ref{arc7}. 
 If $e$ is a shortcut, then there exists a directed path $P$ of length at least 3 from $1$ to $2$ 
 (this path does not involve $e$). Now $P$ 
 can possibly end with $3\rightarrow2$ or $4\rightarrow2$. 
 Since the vertex 3 lies on a horizontal line that is higher than the horizontal line the vertex 1 lies on, the case of $3\rightarrow2$ is impossible.
On the other hand, showing that the case of $4\rightarrow2$ is impossible is similar to our considerations in Case 5.

 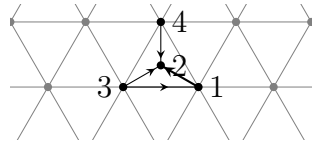
\begin{figure}[!htbp]
 \begin{center}
\begin{tikzpicture}[scale=1]
\clip (-1.5,-0.7) rectangle (2.5,1.1);
\foreach \x in {-5,...,5} 
 \foreach \y in {-5,...,5}
 { \fill[gray!100] (\x + .5* \y, 0.866*\y) circle(0.3ex);
 }
\foreach \x in {-5,...,5} 
{ \draw[gray,very thin] (\x,-0.866*2)-- +(10,17.32);
  \draw[gray,very thin] (\x,-0.866*2)-- +(-10,17.32);
}
\foreach \y in {-5,...,5} 
{ \draw[gray,very thin] (-3,0.866*\y)-- +(15,0);
}
\draw[,reverse directed] (0.5,0.2887)-- +(0,0.5774);
\draw[,reverse directed] (0.5,0.2887)-- +(-0.5,-0.2887);
\draw[thick,reverse directed] (0.5,0.2887)-- +(0.5,-0.2887);
\draw[,directed] (0,0)-- +(1,0);

\fill[black!100] (0.5,0.2887) node[right]{2} circle(0.3ex)
                 +(0,0.5774)node[right]{4} circle(0.3ex)               
                 +(-0.5,-0.2887)node[left]{3}circle(0.3ex)
                 +(0.5,-0.2887)node[right]{1}circle(0.3ex);

\end{tikzpicture}
\caption{\label{arc8} The case when the arc $e=1\rightarrow 2$ is of type $a_8$.}
\end{center}
\end{figure}
 
 \item[Case 8.] Suppose that the arc $e=1\rightarrow 2$ is of type $a_8$, as shown in Figure \ref{arc8}. 
  If $e$ is a shortcut, then there exists a directed path $P$ of length at least 3 from $1$ to $2$ 
 (this path does not involve $e$). Now, $P$ 
 can possibly end with $3\rightarrow2$ or $4\rightarrow2$. 
 Both of these situations are impossible, showing which is similar to our considerations in Case 7.
\end{enumerate}
We are done. \end{proof}

By Lemmas~\ref{LemSmartOri-cycle-free} and~\ref{LemSmartOri}, 
any smart orientation of the boundary subdivision of a triangular grid graph is semi-transitive. Therefore, Theorem~\ref{subdivtrigrid}  is true by Theorem~\ref{semitra}.

\section{Applications of our main result}\label{sec5}

In this section, we consider two applications of Theorem~\ref{subdivtrigrid}. Namely, in Subsection~\ref{sub-tri-eq-tr-shape} we discuss word-representability of subdivisions of triangular grid graphs having equilateral triangle shape, and in Subsection~\ref{sub-Sie-gask-graph} we discuss word-representability of subdivisions of the Sierpi\'{n}ski gasket graph.

\subsection{Subdivision of triangular grid graphs having equilateral triangle shape}\label{sub-tri-eq-tr-shape}

Let $T_n$ be the triangular grid graph shown schematically in Figure~\ref{Tn}. $T_n$ has equilateral triangle shape, and we say that $T_n$ has $n$ levels, that is, $n$ horizontal lines are involved in defining $T_n$. 

\begin{figure}[!htbp]
 \begin{center}
  \begin{tikzpicture}[scale=0.7]
  \fill[black!100] (0,3) circle(0.3ex);
\foreach \y in {1,...,3}
{  \draw[,>=stealth] (0,3)--++(-0.5774*\y,-1*\y);
  \fill[black!100] (-0.5774*\y,3-1*\y) circle(0.3ex);
  }
\foreach \y in {1,...,3}
{
\draw[,>=stealth] (0,3)--++(0.5774*\y,-1*\y);
\fill[black!100] (0.5774*\y,3-1*\y) circle(0.3ex);
};
\foreach \x in {1,...,3} 
 { \draw[,>=stealth] (-0.5774*3,0) -- +(1.1547*\x,0);
    \fill[black!100] (-0.5774*3+1.1547*\x,0) circle(0.3ex);
  }
\draw[,>=stealth] (-0.5774,2) -- +(1.1547,0);
\foreach \x in {1,2} 
 \draw[,>=stealth] (-0.5774*2,1) -- +(1.1547*\x,0);
    \fill[black!100] (0,1) circle(0.3ex);
\foreach \y in {1,...,2}
  \draw[,>=stealth] (0.5774,2)--++(-0.5774*\y,-1*\y);

\foreach \y in {1,...,2}
\draw[,>=stealth] (-0.5774,2)--++(0.5774*\y,-1*\y);

\draw[,>=stealth] (0.5774*2,1)--++(-0.5774,-1);
\draw[,>=stealth] (-0.5774*2,1)--++(0.5774,-1);

\fill[black!100] (-0.5774*4,-1) circle(0.3ex);

\draw[,>=stealth] (-0.5774*4,-1) -- +(1.1547,0);
\draw[,>=stealth] (-0.5774*4,-1) -- +(-0.5774,-1);
\draw[,>=stealth] (-0.5774*4,-1) -- +(0.5774,-1);

\draw[,>=stealth] (-0.5774*2,-1) -- +(-0.5774,-1);
\draw[,>=stealth] (-0.5774*2,-1) -- +(0.5774,-1);

\draw[,>=stealth] (0.5774*4,-1) -- +(-0.5774,-1);
\draw[,>=stealth] (0.5774*4,-1) -- +(0.5774,-1);
\draw[,>=stealth] (0.5774*2,-1) -- +(1.1547,0);
\draw[,>=stealth] (0.5774*2,-1) -- +(-0.5774,-1);
\draw[,>=stealth] (0.5774*2,-1) -- +(0.5774,-1);

\foreach \x in {-5,-3,1,3}
 \draw[,>=stealth] (0.5774*\x,-2) -- +(0.5774*2,0);
\foreach \x in {-4,-2,...,4}
{\fill[black!100] (-0.5774*\x,-1) circle(0.3ex);
\fill[black!100] (-0.5774*\x-0.5774,-2) circle(0.3ex);}
  \fill[white!100] (0,-1) circle(0.4ex);
\fill[black!100] (0.5774*5,-2) circle(0.3ex);
\draw[style=loosely dotted, thick] (0.5774,0)--(1.1547,-1) [xshift=1.1547cm] (0.5774,0)--(1.1547,-1);
\draw[style=loosely dotted, thick] (-0.5774,0)--(-1.1547,-1) [xshift=-1.1547cm] (-0.5774,0)--(-1.1547,-1);

\draw[style=loosely dotted, thick] (-1.1547*0.5,-0.5)--(1.1547*0.5,-0.5);
\draw[style=loosely dotted, thick] (-1.1547,-1)--(1.1547,-1);
\draw[style=loosely dotted, thick] (-1.1547*.5,-2)--(1.1547*.5,-2);
\end{tikzpicture}
\caption{\label{Tn} The triangular grid graph $T_n$ having equilateral triangle shape with $n$ levels.}
 \end{center}
\end{figure}
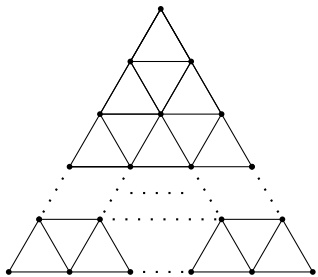

It follows from Theorem~\ref{subdivtrigrid} that subdividing an interior cell in $T_n$ will result in 
a non-word-representable graph. Let $A_n$ be the graph obtained from $T_n$ by subdividing all of its 
boundary cells, that is, $A_n$ is the maximum subdivision of $T_n$. Again, by Theorem~\ref{subdivtrigrid},
$A_n$ is word-representable, and an example of a smart (semi-transitive) orientation is presented in 
Figure~\ref{AnOr}, where we also indicate types of orientations of cells used.

\newsavebox{\AnOri}
\newsavebox{\TriType}
\savebox{\AnOri}
{
\begin{tikzpicture}[scale=0.7]
  \fill[black!100] (0,3) circle(0.3ex);
\foreach \y in {1,...,3}
{  
  \fill[black!100] (-0.5774*\y,3-1*\y) circle(0.3ex);
}
\foreach \y in {1,...,3}
{
\fill[black!100] (0.5774*\y,3-1*\y) circle(0.3ex);
};
\foreach \x in {1,...,3} 
 { 
    \fill[black!100] (-0.5774*3+1.1547*\x,0) circle(0.3ex);
  }
\draw[,directed] (-0.5774,2) -- +(1.1547,0);
    \fill[black!100] (0,1) circle(0.3ex);
%

\foreach \y in {0, -1,-2}
{
\draw[,directed,shift={(0.5774*\y,\y)}] (0,3)--++(-0.5774,-1);
\draw[,directed,shift={(-0.5774*\y,\y)}] (0,3)--++(0.5774,-1);
\draw[,directed,shift={(1.1547*\y,0)}] (0.5,0)--++(1.1547,0);
}
\foreach \y in {0, 1}
{
\draw[,directed,shift={(0.5774*\y,\y)}] (0,1)--++(-0.5774,-1);
\draw[,directed,shift={(-0.5774*\y,\y)}] (0,1)--++(0.5774,-1);
}
\draw[,directed] (-1.1547,1)--++(1.1547,0);
\draw[,directed,shift={(1.1547,0)}] (-1.1547,1)--++(1.1547,0);
\draw[,directed] (0.5774*2,1)--++(-0.5774,-1);
\draw[,directed] (-0.5774*2,1)--++(0.5774,-1);

\fill[black!100] (-0.5774*4,-1) circle(0.3ex);

\draw[,directed] (-0.5774*4,-1) -- +(1.1547,0);
\draw[,directed] (-0.5774*4,-1) -- +(-0.5774,-1);
\draw[,directed] (-0.5774*4,-1) -- +(0.5774,-1);

\draw[,directed] (-0.5774*2,-1) -- +(-0.5774,-1);
\draw[,directed] (-0.5774*2,-1) -- +(0.5774,-1);

\draw[,directed] (0.5774*4,-1) -- +(-0.5774,-1);
\draw[,directed] (0.5774*4,-1) -- +(0.5774,-1);
\draw[,directed] (0.5774*2,-1) -- +(1.1547,0);
\draw[,directed] (0.5774*2,-1) -- +(-0.5774,-1);
\draw[,directed] (0.5774*2,-1) -- +(0.5774,-1);

\foreach \x in {-5,-3,1,3}
 \draw[,directed] (0.5774*\x,-2) -- +(0.5774*2,0);
\foreach \x in {-4,-2,...,4}
{\fill[black!100] (-0.5774*\x,-1) circle(0.3ex);
\fill[black!100] (-0.5774*\x-0.5774,-2) circle(0.3ex);}
  \fill[white!100] (0,-1) circle(0.4ex);

\draw[style=loosely dotted, thick] (0.5774,0)--(1.1547,-1) [xshift=1.1547cm] (0.5774,0)--(1.1547,-1);
\draw[style=loosely dotted, thick] (-0.5774,0)--(-1.1547,-1) [xshift=-1.1547cm] (-0.5774,0)--(-1.1547,-1);

\draw[style=loosely dotted, thick] (-1.1547*0.5,-0.5)--(1.1547*0.5,-0.5);
\draw[style=loosely dotted, thick] (-1.1547,-1)--(1.1547,-1);
\draw[style=loosely dotted, thick] (-1.1547*.5,-2)--(1.1547*.5,-2);

 \draw[,directed] (0,3)--+(0,-2/3);
  \draw[,directed] (0,3)++(0,-2/3)-- +(0.5774,-1/3);
   \draw[,directed] (0,3)++(0,-2/3)-- +(-0.5774,-1/3);

\fill[black!100] (0,2+1/3) circle(0.3ex);
\fill[black!100] (0.5774*5,-2) circle(0.3ex);
\foreach \y in {1,2,4} 
{\fill[black!100] [shift={(0.5774*\y,-\y)}] (0,2+1/3) circle(0.3ex);
 \fill[black!100] [shift={(-0.5774*\y,-\y)}] (0,2+1/3) circle(0.3ex);
  \draw[,reverse directed,shift={(0.5774*\y,-\y)}] (0,2+1/3)-- +(0,2/3);  
  \draw[,reverse directed,shift={(-0.5774*\y,-\y)}] (0,2+1/3)-- +(0,2/3);  
  \draw[,directed,shift={(-0.5774*\y,-\y)}] (0,2+1/3)-- +(-0.5774,-1/3); 
  \draw[,directed,shift={(-0.5774*\y,-\y)}] (0,2+1/3)-- +(0.5774,-1/3);  
  \draw[,reverse directed,shift={(0.5774*\y,-\y)}] (0,2+1/3)-- +(-0.5774,-1/3); 
  \draw[,reverse directed,shift={(0.5774*\y,-\y)}] (0,2+1/3)-- +(0.5774,-1/3);  
 }
\foreach \x in {-1,1}
{ \fill[black!100] [shift={(1.1547*\x,0)}] (0,-2+1/3) circle(0.3ex);
  \draw[,reverse directed,shift={(1.1547*\x,0)}] (0,-2+1/3)-- +(0,2/3);  
  \draw[,reverse directed,shift={(1.1547*\x,0)}] (0,-2+1/3)-- +(-0.5774,-1/3); 
  \draw[,directed,shift={(1.1547*\x,0)}] (0,-2+1/3)-- +(0.5774,-1/3);  
}
\end{tikzpicture}
}
\savebox{\TriType}
{
\begin{tikzpicture}[scale=0.7]
\foreach \y in {0,-1.8,-3.6}{
 \draw[,directed,yshift=\y cm] (0,1)-- +(-0.5774,-1) ;
 \draw[,directed,yshift=\y cm] (0,1)-- +(0.5774,-1);
 \draw[,directed,yshift=\y cm] (0,1)-- +(0,-2/3); 
 \draw[,directed,yshift=\y cm] (-0.5774,0)--(0.5774,0);
 \fill[black!100,yshift=\y cm] (0,1) circle(0.4ex)
                 +(-0.5774,-1) circle(0.4ex)
                 +(0.5774,-1) circle(0.4ex)
                 +(0,-2/3) circle(0.4ex);
 
}
 \draw[,reverse directed] (-0.5774,0)-- +(0.5774,1/3);
  \draw[,directed,yshift=-1.8 cm] (-0.5774,0)-- +(0.5774,1/3);
   \draw[,directed,yshift=-1.8*2 cm] (-0.5774,0)-- +(0.5774,1/3);
 \draw[,reverse directed] (0.5774,0)-- +(-0.5774,1/3);
  \draw[,directed,yshift=-1.8 cm] (0.5774,0)-- +(-0.5774,1/3);
   \draw[,reverse directed,yshift=-1.8*2 cm] (0.5774,0)-- +(-0.5774,1/3);
 \path (0,-.3) node {\footnotesize$A$}
       ++(0,-1.8) node {\footnotesize$B$}
       +(0,-1.8) node  {\footnotesize$C$};
\end{tikzpicture}
}
 
 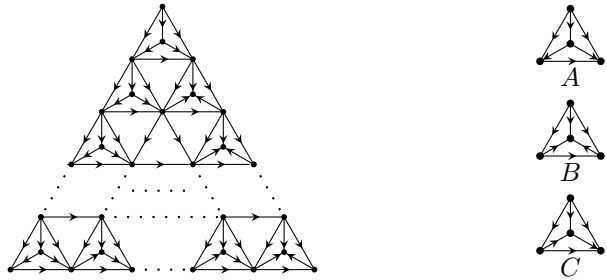
\begin{figure}[!htbp]
 \begin{center}
\begin{picture}(250,100)
\put(0,0){\usebox{\AnOri}}
\put(200,-5){\usebox{\TriType}}
\end{picture}
\caption{\label{AnOr} Semi-transitive orientation of $A_n$.}
\end{center}
\end{figure}

\subsection{Subdivisions of the Sierpi\'{n}ski gasket graph}\label{sub-Sie-gask-graph}

For the two-dimensional Sierpi\'{n}ski gasket graph $SG(n)$, by Theorem~\ref{subdivtrigrid}, we can obtain 
its maximum word-representable subdivision by subdividing all of its
boundary cells. 
Figure \ref{SubdivSG2o} shows the maximum word-representable subdivision of $SG(3)$ and one of its smart orientations.

\newsavebox{\AS}
\savebox{\AS}
{\begin{tikzpicture}[scale=0.5]

\draw (0,0) -- ++(0.5774,1) --++(0.5774,1)
                   --++(0.5774,-1) --++(0.5774,-1)
                   --++(-2*0.5774,0)--++(-0.5774,1)--++(2*0.5774,0)
                   --++(-0.5774,-1)--+(-2*0.5774,0)
                   (2*0.5774,0);
\draw (0,0) -- ++(0.5774,1/3) --++(0,2/3)-- ++(0.5774,1/3)
                    --++(0,2/3)++(0,-2/3)
                   -- ++(0.5774,-1/3) --++(0,-2/3)-- ++(0.5774,-1/3)++(-0.5774,1/3)
                   -- ++(-0.5774,-1/3) -- ++(-0.5774,1/3);
          
\fill[black!100] (0,0) circle(0.5ex)
                 ++(0.5774,1) circle(0.5ex)
                 ++(0.5774,1) circle(0.5ex)
                 ++(0.5774,-1) circle(0.5ex)
                 ++(0.5774,-1) circle(0.5ex)
                 ++(-2*0.5774,0)circle(0.5ex)
               ;
\fill[black!100] (0,0)
                 ++(0.5774,1/3) circle(0.5ex)
                 ++(0.5774,1) circle(0.5ex)
                 ++(0.5774,-1) circle(0.5ex)
               ;
\end{tikzpicture}
}
%
%
%

$SG(n)$ can only have faces of degree $3\cdot 2^k$, where $k=0,1,\ldots$, and the operation 
of subdivision of a (triangular) cell can be generalized to subdivision of other faces. 
One such generalization is inserting a new node inside a face and connecting it to 
{\em all} of the face's nodes. Another possible generalization is subdividing a face 
into three parts by connecting a newly added node to the three {\em conner nodes} of a 
face (note that each face being a $3\cdot 2^k$-cycle, looks like a triangle, and the 
conner nodes are the vertices of such a triangle). 

\newsavebox{\ASO}
\savebox{\ASO}
{
\begin{tikzpicture}[scale=0.5]
\foreach \x in {0,1}
{
 \draw[,directed] (0.5774*\x,\x)-- +(2*0.5774,0);
 \draw[,reverse directed] (0.5774*\x,\x)-- +(0.5774,1/3);
 \draw[,reverse directed] (0.5774*\x,\x)-- +(0.5774,1);
 \draw[,directed,shift={(0.5774*\x,\x)}] (0.5774,1)-- +(0.5774,-1);
  \draw[,directed,shift={(0.5774*\x,\x)}] (0.5774,1)-- +(0,-2/3);
 \draw[,directed,shift={(0.5774*\x,\x)}] (0.5774,1/3)-- +(0.5774,-1/3);  
}

 \draw[,directed] (0.5774*2,0)-- +(2*0.5774,0);
 \draw[,directed] (0.5774*2,0)-- +(0.5774,1/3);
 \draw[,reverse directed] (0.5774*2,0)-- +(0.5774,1);
 \draw[,directed,shift={(0.5774*2,0)}] (0.5774,1)-- +(0.5774,-1);
  \draw[,directed,shift={(0.5774*2,0)}] (0.5774,1)-- +(0,-2/3);
 \draw[,reverse directed,shift={(0.5774*2,0)}] (0.5774,1/3)-- +(0.5774,-1/3);  

\fill[black!100] (0,0) circle(0.5ex)
                 ++(0.5774,1) circle(0.5ex)
                 ++(0.5774,1) circle(0.5ex)
                 ++(0.5774,-1) circle(0.5ex)
                 ++(0.5774,-1) circle(0.5ex)
                 ++(-2*0.5774,0)circle(0.5ex)
                 ;
\fill[black!100] (0,0)
                 ++(0.5774,1/3) circle(0.5ex)
                 ++(0.5774,1) circle(0.5ex)
                 ++(0.5774,-1) circle(0.5ex)
               ;
\end{tikzpicture}

}

%

\begin{figure*}[!htbp]
\begin{center}
\begin{tikzpicture}[scale=0.5]
\path (0,0) node {\usebox{\AS}}
      +(0.5774*2,2) node {\usebox{\AS}}
      +(0.5774*4,0) node {\usebox{\AS}}
      ;
\path (0.5774*4,4) node {\usebox{\AS}}
      +(0.5774*2,2) node {\usebox{\AS}}
      +(0.5774*4,0) node {\usebox{\AS}}
      ; 
\path (0.5774*8,0) node {\usebox{\AS}}
      +(0.5774*2,2) node {\usebox{\AS}}
      +(0.5774*4,0) node {\usebox{\AS}}
      ;      

\path (11,0)++(0,0) node {\usebox{\ASO}}
      +(0.5774*2,2) node {\usebox{\ASO}}
      +(0.5774*4,0) node {\usebox{\ASO}}
      ;
\path (11,0)++(0.5774*4,4) node {\usebox{\ASO}}
      +(0.5774*2,2) node {\usebox{\ASO}}
      +(0.5774*4,0) node {\usebox{\ASO}}
      ; 
\path (11,0)++(0.5774*8,0) node {\usebox{\ASO}}
      +(0.5774*2,2) node {\usebox{\ASO}}
      +(0.5774*4,0) node {\usebox{\ASO}}
      ; 
\end{tikzpicture}
\caption{\label{SubdivSG2o}The maximum word-representable subdivision of $SG(3)$ and one of its smart orientations.}
\end{center}
\end{figure*}
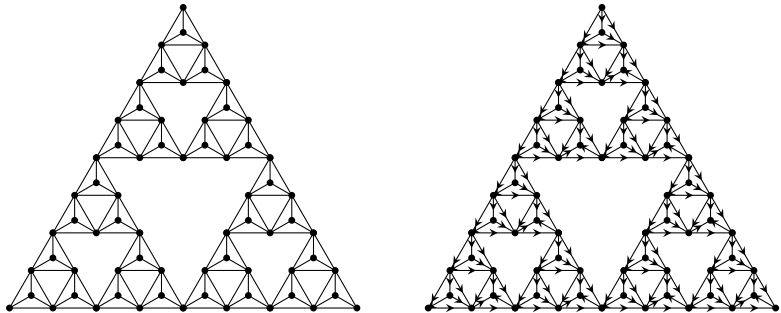

We leave it as an open problem to study word-representability of  the Sierpi\'{n}ski gasket graph 
when subdivision of  faces of larger degrees is allowed.

\end{document}